\newtheorem{lem}{Lemma} 
\newtheorem{theorem}{Theorem}
\newtheorem{assump}{Assumption}
\def\mc{\mathcal}
\def\mb{\mathbf}
\def\mbb{\mathbb}
\def\mbb{\mathbb}
\def\mb{\mathbf}
\def\mc{\mathcal}
\def\ol{\overline}
\def\bds{\boldsymbol}
\newcommand{\mn}[1]{{\left\vert\kern-0.25ex\left\vert\kern-0.25ex\left\vert\kern0.3ex #1 
		\kern0.3ex\right\vert\kern-0.25ex\right\vert\kern-0.25ex\right\vert}}
\begin{document}
\title{Variance-Reduced Decentralized Stochastic Optimization with Gradient Tracking -- \\Part I:~\textbf{\texttt{GT-SAGA}}}
	\author{
		Ran Xin, Usman A. Khan, and Soummya Kar
		\thanks{
			RX and SK are with the Department of Electrical and Computer Engineering, Carnegie Mellon University, Pittsburgh, PA; {\texttt{ranx@andrew.cmu.edu, soummyak@andrew.cmu.edu}}. UAK is with the Department of Electrical and Computer Engineering, Tufts University, Medford, MA; {\texttt{khan@ece.tufts.edu}}.
			The work of RX and SK has been partially supported by NSF under grant CCF-1513936.
			The work of UAK has been partially supported by NSF under grants CCF-1350264 and CMMI-1903972. 
		}
	}
	\maketitle
	
	\begin{abstract}
		In this paper, we study decentralized empirical risk minimization problems, where the goal to minimize a finite-sum of smooth and strongly-convex functions available over a network of nodes. We propose~\textbf{\texttt{GT-SAGA}}, a stochastic first-order algorithm based on decentralized stochastic gradient tracking methods (GT)~\cite{DSGT_Pu,DSGT_Xin} and a variance-reduction technique called SAGA~\cite{SAGA}. We demonstrate various trade-offs and discuss scenarios in which~\textbf{\texttt{GT-SAGA}} achieves superior performance (in terms of the number of local gradient computations required) with respect to existing decentralized schemes. 
	\end{abstract}

	\noindent\fbox{%
    \parbox{\textwidth}{%
  	\begin{center}
	    {\color{red}\textbf{This is a preliminary version of the paper~\url{https://arxiv.org/abs/1912.04230}}}
	\end{center}
    }%
}
	
	\section{Introduction}\label{pf}
	We consider~$n$ nodes connected over a communication graph such that each node~$i$ has access to 
	a local cost function~$f_{i}:\mbb{R}^p\rightarrow\mbb{R}$. 
	The goal of the network is to solve the following optimization problem:
	\begin{align*}
	\label{eq:opt_problem}
	\mbox{P0}:
	\quad\min_{\mb{x}\in\mathbb{R}^p}f(\mb{x})\triangleq\frac{1}{n}\sum_{i=1}^{n}f_i(\mb{x}).
	\end{align*}
	Each node is only allowed to process its own local function and to exchange information with its neighboring nodes. 
	This formulation is well-known as decentralized optimization~\cite{DGD_tsitsiklis,DGD_nedich} that has been studied extensively by the control and signal processing communities over the past decade. Various decentralized approaches have been proposed, for example, Decentralized Gradient Descent (DGD)~\cite{DGD_nedich,DGD_Yuan,GP_neidch}, dual averaging~\cite{dual_averaging_Duchi,dual_averaging_Rabbat}, and ADMM~\cite{ADMM_Wei,ADMM_Shi}. More recently, significant effort has been made to design first-order gradient methods that achieve exact linear convergence for smooth and strongly-convex functions. Examples of such approaches include: primal methods, i.e., EXTRA~\cite{EXTRA}, Exact Diffusion~\cite{Exact_Diffusion}, and DLM~\cite{DLM}, methods based on gradient-tracking~\cite{GT_CDC,NEXT,harnessing,DIGing,add-opt, AGT,GT_jakovetic,Network-DANE} and~$\mc{AB}$/Push-Pull~\cite{AB, push-pull,TV-AB}; and dual methods, i.e.,~\cite{dual_optimal_ICML,dual_GT,dual_optimal_uribe}, that achieve better iteration complexity at the expense of computing the Fenchel dual gradient at each iteration. 
	
	In this paper, we focus on a refined formulation of decentralized optimization as follows:
	\begin{align*}
	\mbox{P1}:
	\quad\min_{\mb{x}\in\mathbb{R}^p}f(\mb{x})\triangleq\frac{1}{n}\sum_{i=1}^{n}f_i(\mb{x}),
	\qquad
	f_i(\mb{x}) \triangleq \frac{1}{m_i}\sum_{j=1}^{m_i} f_{i,j}(\mb{x}),
	\end{align*}	
	where we assume each local objective~$f_i$ is the average of several constituent functions~$\{f_{i,j}\}_{j=1}^{m_i}$. This formulation is motivated by large-scale data-science and machine learning, where large amount of training data is distributed over networked nodes (machines) and the goal is to train a model~$\mb{x}\in\mathbb{R}^p$ utilizing all local data. In Problem P1, each~$f_i = \frac{1}{m_i}\sum_{j=1}^{m_i}f_{i,j}$ is the local empirical risk function associated with the~$m_i$ training data samples at node~$i$. Towards Problem P1, various stochastic variants of DGD, EXTRA, Exact Diffusion and gradient tracking methods have been recently studied~\cite{DSGD_nedich,DGD_Kar,SGP_nedich,DSGD_NIPS,SGP_ICML,D2,DSGT_Pu,DSGT_Xin,SED,SGT_nonconvex_you}. These methods converge sub-linearly and outperform their deterministic counterparts when local data batches are large. 
	
	Finite-sum optimization problems have garnered a strong activity in the centralized settings and various variance-reduction techniques have been developed to accelerate the standard Stochastic Gradient Descent (SGD), for example, SAG~\cite{SAG}, SVRG~\cite{SVRG}, SAGA~\cite{SAGA}, Katyusha~\cite{katyusha}, SARAH~\cite{SARAH}, and several others. Such methods are shown to achieve fast linear convergence to the minimizer for smooth and strongly-convex functions, while maintaining comparable low per-iteration computation cost as SGD. It is therefore natural to introduce variance reduction to decentralized scenarios in order to improve the convergence and complexity aspects. In this paper, we borrow promising techniques from both centralized and decentralized settings, i.e., SAGA~\cite{SAGA} and stochastic gradient tracking methods~\cite{DSGT_Pu,DSGT_Xin}, and propose~\textbf{\texttt{GT-SAGA}}, a novel algorithm that achieves an accelerated linear convergence for smooth and strongly-convex functions.

	The convergence results of~\textbf{\texttt{GT-SAGA}} are based on the following assumptions. 
	\begin{assump}\label{asp1}
		Each local objective,~$f_{i,j}$, is~$\mu$-strongly-convex:~$\forall\mb{x}, \mb{y}\in\mbb{R}^p$, we have, for some~$\mu>0$, 
		\begin{equation*}
		f_{i,j}(\mb{y})\geq f_{i,j}(\mb{x})+ \big\langle\nabla f_{i,j}(\mb{x}), \mb{y}-\mb{x}\big\rangle+\frac{\mu}{2}\|\mb{x}-\mb{y}\|^2.
		\end{equation*}
	\end{assump}
	We note that under Assumption 1, the global objective function~$f$ has a unique minimizer, denoted as~$\mb{x}^*$.
	\begin{assump}\label{asp2}
		Each local objective,~$f_{i,j}$, is~$L$-smooth:~$\forall\mb{x}, \mb{y}\in\mbb{R}^p$, we have, for some~$L>0$,
		\begin{equation*}
		\qquad\|\mb{\nabla} f_{i,j}(\mb{x})-\mb{\nabla} f_{i,j}(\mb{y})\|\leq L\|\mb{x}-\mb{y}\|.
		\end{equation*}
	\end{assump}
	\begin{assump}\label{asp3}
		The weight matrix~$W$ associated with the graph,~$\mc{G}$, is primitive and doubly-stochastic.
	\end{assump}
	
	We denote~$\sigma$ as the second largest singular value of~$W$ and define~$M\triangleq\max_{i}m_i$,~$m\triangleq\min_{i}m_i$ and~$Q\triangleq L/\mu$, the condition number of~$f$. We show that \textbf{\texttt{GT-SAGA}} achieves~$\epsilon$-accuracy (in terms of distance to the minimizer)~with $$\mc{O}\left(\max\left\{M,\frac{M}{m}\frac{Q^2}{\left(1-\sigma\right)^2}\right\}\log\frac{1}{\epsilon}\right)$$
	local component gradient computations. Existing variance-reduced decentralized optimization methods include the following: DSA~\cite{DSA} that combines EXTRA~\cite{EXTRA} with SAGA~\cite{SAGA}; Diffusion-AVRG that combines Exact Diffusion~\cite{Exact_Diffusion} and AVRG~\cite{AVRG}; DSBA~\cite{DSBA} that adds proximal mapping~\cite{point-SAGA} to each iteration of DSA;~\cite{edge_DSA} that applies edge-based method~\cite{edge_AL} to DSA; ADFS~\cite{ADFS} that applies an accelerated  randomized proximal coordinate gradient method~\cite{APCG} to the dual formulation of Problem P1. We compare the convergence rate of~\textbf{\texttt{GT-SAGA}} with several state-of-the-art first-order primal methods that solve Problem P1 in Table 1, where, for the simplicity of presentation, we assume that all nodes have the same number of local functions, i.e., $M=m=\widetilde{m}$. It can be observed that in large-scale scenarios where~$\widetilde{m}$ is very large,~\textbf{\texttt{GT-SAGA}} improves upon the convergence rate of these methods in terms of the joint dependence on~$Q$ and~$\widetilde{m}$. We acknowledge that DSBA~\cite{DSBA} and ADFS~\cite{ADFS} achieve better iteration complexity than~\textbf{\texttt{GT-SAGA}}, however, at the expense of computing the proximal mapping of a component function at each iteration.
	Although the computation of this proximal mapping is efficient for certain function classes, it can be very expensive for general functions. Finally, it is worth noting that all existing variance-reduced decentralized stochastic methods~\cite{DSA,DAVRG,edge_DSA,DSBA,ADFS} require symmetric weight matrices and thus undirected networks. In contrast,~\textbf{\texttt{GT-SAGA}} only requires doubly-stochastic weights and therefore can be implemented over certain classes of directed graphs that admit doubly-stochastic weights~\cite{weight_balance_digraph}. This provides more flexibility in topology design of the network.
	
	We now describe the rest of the paper: Section~\ref{algorithm} formally describes the~\textbf{\texttt{GT-SAGA}} algorithm. Section~\ref{main proof} details the convergence analysis of the proposed algorithm.
	
	\begin{center}
		\begin{table}[]
			\caption{Comparison of several state-of-the-art decentralized optimization methods}
			\centering
			\begin{tabular}{|c|c|}
				\hline
				\textbf{Algorithm} & \textbf{Convergence Rate} \\ \hline
				Gradient Tracking~\cite{harnessing} & $\mc{O}\left(\frac{\widetilde{m}Q^2}{\left(1-\sigma\right)^2}\log\frac{1}{\epsilon}\right)$  \\ \hline
				Gradient Tracking with Nesterov acceleration (see Theorem 3 in~\cite{AGT}) & $\mc{O}\left(\frac{\widetilde{m}Q^{\frac{5}{7}}}{\sigma^{1.5}\left(1-\sigma\right)^{1.5}}\log\frac{1}{\epsilon}\right)$  \\ \hline
				DSA~\cite{DSA} & $\mc{O}\left(\max\left\{\widetilde{m}Q,\frac{Q^4}{1-\sigma},\frac{1}{(1-\sigma)^2}\right\}\log\frac{1}{\epsilon}\right)$\\ \hline
				Edge-based DSA~\cite{edge_DSA} & linear (no explicit rate provided in terms of~$\widetilde{m},Q,\sigma$) \\ \hline
				Diffusion-AVRG~\cite{DAVRG} & linear (no explicit rate provided in terms of~$\widetilde{m},Q,\sigma$) \\ \hline
				\textbf{\texttt{GT-SAGA} (this work)} &  $\mc{O}\left(\max\left\{\widetilde{m},\frac{Q^2}{\left(1-\sigma\right)^2}\right\}\log\frac{1}{\epsilon}\right)$ \\ \hline
			\end{tabular}
			\label{tab:my-table}
		\end{table}
	\end{center}
	
	\section{\textbf{\texttt{GT-SAGA}}: Algorithm Description}\label{algorithm}
	Towards Problem P1, we now formally introduce~\textbf{\texttt{GT-SAGA}} in Algorithm 1. As in stochastic gradient tracking methods~\cite{DSGT_Pu,DSGT_Xin}, each node~$i$ iteratively updates two vector variables~$\mb{x}_i^k$, the estimate of the minimizer~$\mb{x}^*$, and~$\mb{y}_i^k$, the local gradient tracker. We note that~$\mb{z}_{i,j}^k$ is an auxiliary variable maintained at each node~$i$ that denotes the most recent point where the gradient of the component function~$f_{i,j}$ was computed before time~$k$ and is not explicitly used in the practical implementation.
	Intuitively, the local SAGA gradient~$\mb{g}_i^{k}$ is an unbiased estimator of the local full gradient~$\nabla f_i(\mb{x}_i^k)$ with decreasing variance as~$\mb{x}_i^k$ approaches to~$\mb{x}^*$. The average (over the nodes) of the local gradient tracker~$\mb{y}_i^k$ deterministically preserves the average of all local SAGA gradients,~$\frac{1}{n}\sum_{i=1}^{n}\mb{g}_i^k$, and therefore asymptotically approach to the gradient of the global objective function. In the rest of the paper, we assume~$p=1$ for the sake of simplicity. It is straightforward to develop the general case of~$p>1$ with the help of the Kronecker products; see e.g., the procedure in~\cite{AB}.
	\begin{algorithm}[H]
		\caption{~\textbf{\texttt{GT-SAGA}} at each node~$i$}
		\begin{algorithmic}[1]
			\Require Arbitrary starting point~$\mb{x}_i^0\in\mbb{R}^p$ and~step-size~$\alpha>0$. 
			
			~~Local gradient table:~$\{\nabla f_{i,j}(\mb{z}_{i,j}^0)\}_{j=1}^{m_i}$ with~$\mb{x}_i^0=\mb{z}_{i,j}^0=\mb{z}_{i,j}^1$,~$\forall j$. 
			
			~~Gradient tracker:~$\mb{y}_i^0=\mb{g}_i^0=\frac{1}{m_i}\sum_{j=1}^{m_i}\nabla f_{i,j}(\mb{z}_{i,j}^{0})$. 
			
			~~Doubly stochastic weights:~$W=\{w_{ir}\}\in\mathbb{R}^{n\times n}$.
			\For{$k= 0,1,2,\cdots$}
			\State $\mb{x}_{i}^{k+1} = \sum_{r=1}^{n}w_{ir}\mb{x}_{r}^{k} - \alpha\mb{y}_{i}^{k}$ \Comment{Estimate update}
			\State Select~$s_{i}^{k+1}$ uniformly at random from~$\{1,\cdots,m_i\}$.
			\Comment{Sample from  local data}
			\State $\mb{g}_{i}^{k+1} = \nabla f_{i,s_i^{k+1}}(\mb{x}_{i}^{k+1}) - \nabla f_{i,s_i^{k+1}}(\mb{z}_{i,s_i^{k+1}}^{k+1}) + \frac{1}{m_i}\sum_{j=1}^{m_i}\nabla f_{i,j}(\mb{z}_{i,j}^{k+1})$ \Comment{Local SAGA update}
			\State $\mb{y}_{i}^{k+1} = \sum_{r=1}^{n}w_{ir}\mb{y}_{r}^{k} + \mb{g}_i^{k+1} - \mb{g}_i^{k}$ \Comment{Gradient Tracker update}
			\State Replace~$\nabla f_{i,s_i^{k+1}}(\mb{z}_{i,s_i^{k+1}}^{k+1})$ by~$\nabla f_{i,s_i^{k+1}}(\mb{x}_{i}^{k+1})$ in the local gradient table
			\Comment{Update local gradient table}
			\If{$j = s_{i}^{k+1}$}
			$\mb{z}_{i,j}^{k+2} = \mb{x}_i^{k+1}$
			\Else
			~$\mb{z}_{i,j}^{k+2} = \mb{z}_{i,j}^{k+1}$
			\EndIf
			\EndFor
		\end{algorithmic}
	\end{algorithm}
	
	\section{\textbf{\texttt{GT-SAGA}}: Convergence Analysis}\label{main proof}
	\subsection{Preliminaries}
	The randomness of~\textbf{\texttt{GT-SAGA}} lies in the set of independent random variables~$\{s_i^k\}_{i\in\mc{V}}^{k\geq1}$. We denote~$\mc{F}^k$ as the~$\sigma$-algebra generated by~$\{s_i^t\}^{t\leq k-1}_{i\in\mc{V}}$.
	We note that~$\{\mb{x}_i^t\}^{t\leq k}_{i\in\mc{V}}$,~$\{\mb{z}_{i,j}^t\}^{t\leq k}_{i\in\mc{V}}$,~$\{\mb{g}_i^t\}^{t\leq k-1}_{i\in\mc{V}}$ and~$\{\mb{y}_i^t\}^{t\leq k-1}_{i\in\mc{V}}$ are fixed given~$\mc{F}^k$ and~$\mathbb{E}\left[\:\cdot\:|\mc{F}^k\right]$ denotes the conditional expectation over~$\{s_i^k\}_{i\in\mc{V}}$ given~$\mc{F}^k$. We now write~\textbf{\texttt{GT-SAGA}} in the following compact matrix form for the sake of analysis:
	\begin{subequations}\label{WWSAGA}
		\begin{align}
		\mb{x}^{k+1} &= W\mb{x}^k - \alpha\mb{y}^k, \label{av}\\
		\mb{y}^{k+1} &= W\mb{y}^k +
		\mb{g}^{k+1}-\mb{g}^k, \label{bv}
		\end{align}
	\end{subequations}
	where we use the following notation:
	\begin{align*}
	\mb{x}^k\triangleq\left[{\mb{x}^{k}_{1}}^\top,\cdots,{\mb{x}^{k}_{n}}^\top\right]^\top, \quad
	\mb{y}^k\triangleq\left[{\mb{y}^{k}_{1}}^\top,\cdots,{\mb{y}^{k}_{n}}^\top\right]^\top, \quad
	\mb{g}^k\triangleq\left[{\mb{g}^{k}_{1}}^\top,\cdots,{\mb{g}^{k}_{n}}^\top\right]^\top.
	\end{align*}
	We also define the following quantities:
	$$\ol{\mb{x}}^k \triangleq \frac{1}{n}\mb{1}_n^\top\mb{x}^k,~~\ol{\mb{y}}^k \triangleq \frac{1}{n}\mb{1}_n^\top\mb{y}^k,~~ \ol{\mb{g}}^k\triangleq \frac{1}{n}\mb{1}_n^\top\mb{g}^k,~~\nabla\mb{f}(\mb{x}^k)\triangleq[\nabla f_1(\mb{x}_1^k)^\top,\dots,\nabla f_n(\mb{x}_n^k)^\top]^\top,~~\mb{h}(\mb{x}^k)\triangleq\frac{1}{n}\mb{1}_n^\top\nabla\mb{f}(\mb{x}^k).$$
	The Lemmas in this subsection are standard in the literature of stochastic gradient tracking methods and SAGA. Their proofs can be found in, for example,~\cite{harnessing,DSGT_Pu,DSGT_Xin,SAGA,DIGing}.

	Each local SAGA gradient~$\mb{g}_i^k$ is an unbiased estimator of the local full gradient~$\nabla f_i({\mb{x}_i^k})$.
	\begin{lem}\label{p1}
		$\mathbb{E}\left[\mb{g}^k|\mc{F}^k\right] = \nabla \mb{f}(\mb{x}^k),~\forall k\geq 0$.
	\end{lem}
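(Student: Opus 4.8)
The plan is to prove the claim node-by-node and then stack the resulting identities back into the compact vector form. The crucial preliminary observation, which I would record first, is a careful accounting of measurability: by the definition of the filtration, $\mc{F}^k$ is generated by $\{s_i^t\}_{i\in\mc{V}}^{t\leq k-1}$, so that the iterate $\mb{x}_i^k$ and every entry of the local gradient table $\{\mb{z}_{i,j}^k\}_{j=1}^{m_i}$ are deterministic given $\mc{F}^k$, while the fresh sample $s_i^k$ (which is uniform on $\{1,\dots,m_i\}$ and independent across nodes) is the only source of randomness in $\mb{g}_i^k$. Establishing this cleanly is really the whole content of the lemma; once the right variables are identified as fixed, the computation is a short application of linearity of expectation.

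With this in place, I would compute the conditional expectation of the local SAGA gradient directly. Recalling
\begin{equation*}
\mb{g}_i^{k} = \nabla f_{i,s_i^{k}}(\mb{x}_i^{k}) - \nabla f_{i,s_i^{k}}\big(\mb{z}_{i,s_i^{k}}^{k}\big) + \frac{1}{m_i}\sum_{j=1}^{m_i}\nabla f_{i,j}\big(\mb{z}_{i,j}^{k}\big),
\end{equation*}
I would take $\mathbb{E}[\,\cdot\mid\mc{F}^k]$ term by term. Since $s_i^k$ is uniform, the first term contributes $\frac{1}{m_i}\sum_{j=1}^{m_i}\nabla f_{i,j}(\mb{x}_i^k)=\nabla f_i(\mb{x}_i^k)$ by the definition of $f_i$ in Problem~P1; and because the table entries $\mb{z}_{i,j}^k$ are $\mc{F}^k$-measurable, the randomly indexed stored gradient has expectation $\mathbb{E}\big[\nabla f_{i,s_i^{k}}(\mb{z}_{i,s_i^{k}}^{k})\mid\mc{F}^k\big]=\frac{1}{m_i}\sum_{j=1}^{m_i}\nabla f_{i,j}(\mb{z}_{i,j}^k)$, while the third (table-average) term is already $\mc{F}^k$-measurable and thus passes through the expectation unchanged.

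The key cancellation is then immediate: the expectation of the second term is exactly the third term, so the two annihilate and we are left with $\mathbb{E}[\mb{g}_i^k\mid\mc{F}^k]=\nabla f_i(\mb{x}_i^k)$ for every node $i$. Stacking these $n$ identities according to the definitions of $\mb{g}^k$ and $\nabla\mb{f}(\mb{x}^k)$ yields the claimed $\mathbb{E}[\mb{g}^k\mid\mc{F}^k]=\nabla\mb{f}(\mb{x}^k)$. There is no genuine analytical obstacle here; the only point requiring care is the bookkeeping of which quantities are frozen by $\mc{F}^k$ versus which depend on the fresh draws $s_i^k$, together with the use of independence across nodes so that the per-node expectations assemble correctly into the stacked form.
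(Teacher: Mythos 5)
Your proof is correct and is exactly the standard argument that the paper itself omits, deferring instead to the cited SAGA/gradient-tracking references: conditioning on $\mc{F}^k$ freezes $\mb{x}_i^k$ and the gradient table $\{\mb{z}_{i,j}^k\}_{j=1}^{m_i}$, the uniform draw $s_i^k$ makes the first term average to $\nabla f_i(\mb{x}_i^k)$ while the expectation of the second term cancels the table-average term, and stacking over $i$ yields the vectorized identity. The only cosmetic remark is that your displayed formula for $\mb{g}_i^k$ is the update used for $k\geq 1$; at $k=0$ the initialization gives $\mb{g}_i^0=\frac{1}{m_i}\sum_{j=1}^{m_i}\nabla f_{i,j}(\mb{z}_{i,j}^0)=\nabla f_i(\mb{x}_i^0)$ deterministically, so the claim holds there trivially.
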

	The average of gradient trackers~$\{\mb{y}_i^k\}$ preserves the average of local SAGA gradients~$\{\mb{g}_i^k\}$.
	\begin{lem}\label{p2}
		$\ol{\mb{y}}^k = \ol{\mb{g}}^k, \forall k\geq0$.
	\end{lem}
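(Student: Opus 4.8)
The plan is to establish this averaging invariant by a straightforward induction on $k$, whose engine is the left-stochasticity of $W$ (a consequence of Assumption~\ref{asp3}) together with the matching initialization of the two variables in Algorithm~1. For the base case $k=0$, the \texttt{Require} block sets $\mb{y}_i^0 = \mb{g}_i^0$ for every node $i$, hence $\mb{y}^0 = \mb{g}^0$ and trivially $\ol{\mb{y}}^0 = \ol{\mb{g}}^0$.

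For the inductive step I would assume $\ol{\mb{y}}^k = \ol{\mb{g}}^k$ and left-multiply the gradient-tracker update~\eqref{bv} by $\tfrac{1}{n}\mb{1}_n^\top$. Since $W$ is doubly-stochastic we have $\mb{1}_n^\top W = \mb{1}_n^\top$, so the mixing term collapses via $\tfrac{1}{n}\mb{1}_n^\top W \mb{y}^k = \ol{\mb{y}}^k$, and the update becomes
$$\ol{\mb{y}}^{k+1} = \ol{\mb{y}}^k + \ol{\mb{g}}^{k+1} - \ol{\mb{g}}^k.$$
Substituting the induction hypothesis $\ol{\mb{y}}^k = \ol{\mb{g}}^k$ cancels the two time-$k$ terms and yields $\ol{\mb{y}}^{k+1} = \ol{\mb{g}}^{k+1}$, closing the induction.

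There is essentially no obstacle here: the only ingredients are the identity $\mb{1}_n^\top W = \mb{1}_n^\top$ and the consistent initialization $\mb{y}^0 = \mb{g}^0$. Equivalently, one may phrase the same computation without explicit induction by noting that left-multiplication of~\eqref{bv} by $\tfrac{1}{n}\mb{1}_n^\top$ turns it into the telescoping recursion $\ol{\mb{y}}^{k+1}-\ol{\mb{g}}^{k+1} = \ol{\mb{y}}^{k}-\ol{\mb{g}}^{k}$, so the quantity $\ol{\mb{y}}^k - \ol{\mb{g}}^k$ is conserved for all $k$ and hence remains at its initial value $0$. The only point requiring minor care is checking that the initialization gives $\mb{y}^0=\mb{g}^0$ exactly (rather than merely in average), but this is immediate from Algorithm~1.
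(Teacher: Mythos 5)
Your proof is correct and is exactly the standard argument: the paper itself omits the proof of this lemma, deferring to the gradient-tracking references, where the same telescoping/induction via $\mb{1}_n^\top W = \mb{1}_n^\top$ and the initialization $\mb{y}^0 = \mb{g}^0$ is used. The only cosmetic note is that the relevant property $\mb{1}_n^\top W = \mb{1}_n^\top$ is the column-sum (not row-sum) condition, which is of course guaranteed by double stochasticity as you invoke it.
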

	Based on Lemma~\ref{p1} and~\ref{p2}, the following is straightforward.
	\begin{lem}\label{p3}
		$\mathbb{E}\left[\ol{\mb{y}}^k|\mc{F}^k\right] = \mb{h}(\mb{x}^k)$,~$\forall k\geq0$.
	\end{lem}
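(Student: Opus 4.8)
The plan is to combine the two preceding lemmas directly, since $\mb{h}(\mb{x}^k)$ is by definition the row-average $\frac{1}{n}\mb{1}_n^\top\nabla\mb{f}(\mb{x}^k)$ of the local full gradients, while $\ol{\mb{y}}^k$ is the row-average $\frac{1}{n}\mb{1}_n^\top\mb{y}^k$ of the gradient trackers. First I would invoke Lemma~\ref{p2} to replace the average of the gradient trackers by the average of the local SAGA gradients, writing $\ol{\mb{y}}^k = \ol{\mb{g}}^k = \frac{1}{n}\mb{1}_n^\top\mb{g}^k$. Crucially, this identity holds pathwise (deterministically), not merely in expectation, so it remains valid after conditioning on~$\mc{F}^k$.

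Next I would take the conditional expectation of both sides given~$\mc{F}^k$. Because $\frac{1}{n}\mb{1}_n^\top$ is a fixed deterministic vector that does not depend on the randomness $\{s_i^k\}_{i\in\mc{V}}$, linearity of conditional expectation lets me pull it outside the expectation, yielding $\mathbb{E}[\ol{\mb{y}}^k|\mc{F}^k] = \frac{1}{n}\mb{1}_n^\top\,\mathbb{E}[\mb{g}^k|\mc{F}^k]$. I would then apply Lemma~\ref{p1}, which identifies the conditional expectation of the stacked SAGA gradient with the stacked local full gradient $\nabla\mb{f}(\mb{x}^k)$. Substituting gives $\frac{1}{n}\mb{1}_n^\top\nabla\mb{f}(\mb{x}^k)$, which is exactly $\mb{h}(\mb{x}^k)$, and this closes the argument.

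As the authors indicate, there is no real obstacle here: the statement is a pure bookkeeping consequence of the two earlier lemmas. The only point that warrants a moment of care is the interchange of the deterministic averaging operator $\frac{1}{n}\mb{1}_n^\top$ with the conditional expectation. This is justified precisely because the averaging weights are constants independent of the sampling variables, and because $\mb{x}^k$ is $\mc{F}^k$-measurable (as recorded in the preliminaries), so that no nontrivial tower-property or measurability subtleties enter. The entire proof is therefore a three-line chain of substitutions: Lemma~\ref{p2}, linearity of conditional expectation, and Lemma~\ref{p1}.
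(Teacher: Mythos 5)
Your proof is correct and is exactly the route the paper takes: the authors state that Lemma~\ref{p3} follows directly from Lemmas~\ref{p1} and~\ref{p2}, and your chain (pathwise identity $\ol{\mb{y}}^k=\ol{\mb{g}}^k$, linearity of conditional expectation with the deterministic averaging operator $\frac{1}{n}\mb{1}_n^\top$, then unbiasedness of $\mb{g}^k$) is precisely that argument made explicit.
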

	The difference of~$\mb{h}(\mb{x}^k)$ and~$\nabla f(\ol{\mb{x}}^k)$ is bounded by the consensus error~$\left\|\mb{x}^k-\mb{1}_n\ol{\mb{x}}^k\right\|$ as follows. 
	\begin{lem}\label{p4}
		$\left\|\mb{h}(\mb{x}^k)-\nabla f(\ol{\mb{x}}^k)\right\|\leq\frac{L}{\sqrt{n}}\left\|\mb{x}^k-\mb{1}_n\ol{\mb{x}}^k\right\|$,~$\forall k\geq0$.
	\end{lem}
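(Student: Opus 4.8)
The plan is to reduce the left-hand side to an average of per-node gradient differences and then control each difference by smoothness. First I would use the definitions of $\mb{h}(\mb{x}^k)$ and of $f = \frac{1}{n}\sum_{i=1}^n f_i$ to rewrite the quantity inside the norm as a single average, since the two terms share the same outer averaging:
\begin{equation*}
\mb{h}(\mb{x}^k) - \nabla f(\ol{\mb{x}}^k) = \frac{1}{n}\sum_{i=1}^n \big[\nabla f_i(\mb{x}_i^k) - \nabla f_i(\ol{\mb{x}}^k)\big].
\end{equation*}

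Before estimating this, I would observe that Assumption~\ref{asp2} lifts from the components $f_{i,j}$ to each local objective $f_i = \frac{1}{m_i}\sum_{j=1}^{m_i} f_{i,j}$: averaging the $L$-smooth component gradients and applying the triangle inequality shows $\|\nabla f_i(\mb{x}) - \nabla f_i(\mb{y})\| \le L\|\mb{x} - \mb{y}\|$ for all $\mb{x},\mb{y}$, so each $f_i$ is itself $L$-smooth.

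The key estimate is a two-step bound on the norm of the average. Writing $d_i \triangleq \nabla f_i(\mb{x}_i^k) - \nabla f_i(\ol{\mb{x}}^k)$, I would apply the triangle inequality and then Cauchy--Schwarz (pairing the vector $(\|d_1\|,\dots,\|d_n\|)$ against the all-ones vector) to trade the average for the stacked Euclidean norm while generating the $1/\sqrt{n}$ factor:
\begin{equation*}
\Big\|\tfrac{1}{n}\sum_{i=1}^n d_i\Big\| \le \tfrac{1}{n}\sum_{i=1}^n \|d_i\| \le \tfrac{1}{\sqrt{n}}\Big(\sum_{i=1}^n \|d_i\|^2\Big)^{1/2}.
\end{equation*}
Applying $L$-smoothness of each $f_i$ termwise gives $\sum_{i=1}^n \|d_i\|^2 \le L^2 \sum_{i=1}^n \|\mb{x}_i^k - \ol{\mb{x}}^k\|^2 = L^2\|\mb{x}^k - \mb{1}_n\ol{\mb{x}}^k\|^2$, because the $i$-th block of $\mb{x}^k - \mb{1}_n\ol{\mb{x}}^k$ is precisely $\mb{x}_i^k - \ol{\mb{x}}^k$. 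Combining the two displays yields the claimed inequality.

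The argument is essentially mechanical, and I do not expect a genuine obstacle. The only step requiring care is the passage from the average of per-node norms to the stacked norm: one must apply Cauchy--Schwarz in the correct direction so the prefactor emerges as $1/\sqrt{n}$ rather than $1$. Everything else follows directly from the definitions and Assumption~\ref{asp2}.
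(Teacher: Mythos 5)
Your proof is correct and is exactly the standard argument: the paper omits a proof of this lemma (deferring to the cited gradient-tracking literature), and the decomposition into $\frac{1}{n}\sum_i[\nabla f_i(\mb{x}_i^k)-\nabla f_i(\ol{\mb{x}}^k)]$ followed by Cauchy--Schwarz and the $L$-smoothness of each $f_i$ (inherited from the components $f_{i,j}$) is precisely how it is established there.
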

	The weight matrix~$W$ is a contraction operator.
	\begin{lem}
		$\forall \mb{x}\in\mathbb{R}^n, \left\|W\mb{x} - W_\infty\mb{x}\right\|\leq\sigma\left\|\mb{x} - W_\infty\mb{x}\right\|$, where~$W_\infty=\frac{1}{n}\mb{1}_n\mb{1}_n^\top$.
	\end{lem}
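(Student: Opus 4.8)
The plan is to prove the contraction by splitting any $\mb{x}\in\mbb{R}^n$ into its projection onto the consensus direction $\mathrm{span}(\mb{1}_n)$ and the orthogonal residual, and then to show that $W$ shrinks the residual by at least a factor of $\sigma$. First I would record the elementary consequences of Assumption~\ref{asp3}: since $W$ is doubly-stochastic, $W\mb{1}_n=\mb{1}_n$ and $\mb{1}_n^\top W=\mb{1}_n^\top$, from which $W_\infty^2=W_\infty$, $\mb{1}_n^\top W_\infty=\mb{1}_n^\top$, and crucially $WW_\infty=W_\infty W=W_\infty$.

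Next I would set $\mb{v}\triangleq(I-W_\infty)\mb{x}=\mb{x}-W_\infty\mb{x}$ and observe two facts. First, $\mb{v}$ is orthogonal to $\mb{1}_n$: indeed $\mb{1}_n^\top\mb{v}=\mb{1}_n^\top\mb{x}-\mb{1}_n^\top W_\infty\mb{x}=0$ using $\mb{1}_n^\top W_\infty=\mb{1}_n^\top$, so that $W_\infty\mb{v}=0$. Second, the quantity to be bounded collapses to a single application of $W$: writing $\mb{x}=W_\infty\mb{x}+\mb{v}$ and using $WW_\infty=W_\infty$ gives $W\mb{x}-W_\infty\mb{x}=WW_\infty\mb{x}+W\mb{v}-W_\infty\mb{x}=W\mb{v}$. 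Hence the claim reduces to proving $\|W\mb{v}\|\le\sigma\|\mb{v}\|$ for every $\mb{v}$ orthogonal to $\mb{1}_n$, noting that $\|\mb{v}\|=\|\mb{x}-W_\infty\mb{x}\|$.

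Finally I would establish this residual bound spectrally. The matrix $W^\top W$ is symmetric positive semidefinite, and its eigenvalues are the squared singular values of $W$. Because $W$ is doubly-stochastic its spectral norm equals $1$ (e.g.\ $\|W\|_2\ge\|W\mb{1}_n\|/\|\mb{1}_n\|=1$, while $W$ being a convex combination of permutation matrices by Birkhoff's theorem gives $\|W\|_2\le 1$), so the largest eigenvalue of $W^\top W$ is $1$ with eigenvector $\mb{1}_n$ (since $W^\top W\mb{1}_n=W^\top\mb{1}_n=\mb{1}_n$); the next largest eigenvalue is $\sigma^2$ by the definition of $\sigma$. For $\mb{v}\perp\mb{1}_n$ the Courant--Fischer variational characterization then yields $\|W\mb{v}\|^2=\mb{v}^\top W^\top W\mb{v}\le\sigma^2\|\mb{v}\|^2$, which is exactly the desired inequality.

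The main obstacle is purely bookkeeping in the spectral step: one must be careful to justify that $1$ is genuinely the \emph{largest} singular value of $W$ and that $\mb{1}_n$ is its associated singular vector \emph{before} invoking the variational bound for the second largest value on $\mb{1}_n^\perp$. Everything else is a direct manipulation of the projector identities $WW_\infty=W_\infty$ and $W_\infty\mb{v}=0$.
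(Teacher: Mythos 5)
The paper does not actually prove this lemma---it is listed among the "standard" preliminary facts with proofs deferred to the cited references---so there is no in-paper argument to compare against; your proof is correct and is precisely the standard argument those references use (reduce to $W\mb{x}-W_\infty\mb{x}=W\mb{v}$ with $\mb{v}\perp\mb{1}_n$ via the projector identities, then bound $\|W\mb{v}\|$ spectrally). The one cosmetic point is that your final step is more accurately attributed to the spectral decomposition of $W^\top W$ restricted to the invariant subspace $\mb{1}_n^\perp$ (legitimate because you showed $W^\top W\mb{1}_n=\mb{1}_n$ and that $1$ is the top eigenvalue, so $\mb{1}_n^\perp$ is spanned by the remaining eigenvectors and the Rayleigh quotient there is at most $\sigma^2$) than to the min--max form of Courant--Fischer, which for an arbitrary $(n-1)$-dimensional subspace only bounds $\lambda_2$ from above by the subspace maximum rather than the reverse.
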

	Descending along the direction of full gradient leads to a contraction in the optimality gap~\cite{nesterov_book}.
	\begin{lem}
		Let~$f$ be~$\mu$-strongly-convex and~$L$-smooth. If~$0<\alpha\leq\frac{1}{L}$, the following holds, for~$\forall\mb{x}\in\mathbb{R}^p$,
		\begin{align*}
		\left\|\mb{x}-\alpha\nabla f(\mb{x}) -\mb{x}^*\right\|
		\leq(1-\mu\alpha)\left\|\mb{x}-\mb{x}^*\right\|
		\end{align*}
	\end{lem}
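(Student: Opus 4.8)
The plan is to reduce the claim to sharp first-order properties of the gradient map of $f$. As noted after Assumption~1, $f=\frac1n\sum_i\frac1{m_i}\sum_j f_{i,j}$ is an average of $\mu$-strongly-convex and $L$-smooth functions, hence itself $\mu$-strongly-convex and $L$-smooth with a unique minimizer $\mb{x}^*$ satisfying $\nabla f(\mb{x}^*)=\mb{0}$. I would first rewrite the target quantity in difference form,
\begin{equation*}
\mb{x}-\alpha\nabla f(\mb{x})-\mb{x}^* = (\mb{x}-\mb{x}^*) - \alpha\big(\nabla f(\mb{x})-\nabla f(\mb{x}^*)\big),
\end{equation*}
and then, writing $\mb{d}\triangleq\mb{x}-\mb{x}^*$, expand the squared norm into $\|\mb{d}\|^2 - 2\alpha\langle\nabla f(\mb{x}),\mb{d}\rangle + \alpha^2\|\nabla f(\mb{x})\|^2$. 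Everything reduces to lower-bounding the cross term $\langle\nabla f(\mb{x}),\mb{d}\rangle$ while controlling the gradient-norm term.

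The key ingredient is the standard interpolation inequality for $\mu$-strongly-convex, $L$-smooth functions (Nesterov), which with $\mb{y}=\mb{x}^*$ reads
\begin{equation*}
\langle\nabla f(\mb{x}),\mb{d}\rangle \geq \frac{\mu L}{\mu+L}\|\mb{d}\|^2 + \frac{1}{\mu+L}\|\nabla f(\mb{x})\|^2.
\end{equation*}
Substituting this into the expansion leaves a residual term $\alpha\big(\alpha-\tfrac{2}{\mu+L}\big)\|\nabla f(\mb{x})\|^2$. I would then use $\alpha\leq\frac{1}{L}\leq\frac{2}{\mu+L}$ (the second inequality holds because $\mu\leq L$), so this coefficient is nonpositive; combined with the strong-convexity consequence $\|\nabla f(\mb{x})\|=\|\nabla f(\mb{x})-\nabla f(\mb{x}^*)\|\geq\mu\|\mb{d}\|$, the residual term is bounded above by $\alpha\big(\alpha-\tfrac{2}{\mu+L}\big)\mu^2\|\mb{d}\|^2$. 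Collecting terms collapses the bound to exactly $(1-\mu\alpha)^2\|\mb{d}\|^2$, and taking square roots finishes the proof.

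The subtle point — and the step I expect to be the main obstacle — is sharpness. The naive route of bounding the cross term only by strong monotonicity $\langle\nabla f(\mb{x}),\mb{d}\rangle\geq\mu\|\mb{d}\|^2$ and the gradient term by the Lipschitz bound $\|\nabla f(\mb{x})\|\leq L\|\mb{d}\|$ yields the factor $1-2\mu\alpha+\alpha^2L^2$, which exceeds $(1-\mu\alpha)^2$ whenever $L>\mu$ and therefore fails. Recovering the exact factor $(1-\mu\alpha)$ requires both the refined interpolation inequality and retaining (rather than discarding) the gradient-norm term, so that it can be canceled using the lower bound $\|\nabla f(\mb{x})\|\geq\mu\|\mb{d}\|$. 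As an alternative, under twice-differentiability one can argue more directly: by the fundamental theorem of calculus $\nabla f(\mb{x})-\nabla f(\mb{x}^*)=H\,\mb{d}$ with $H\triangleq\int_0^1\nabla^2 f(\mb{x}^*+t\mb{d})\,dt$ satisfying $\mu I\preceq H\preceq LI$, so that $\mb{x}-\alpha\nabla f(\mb{x})-\mb{x}^*=(I-\alpha H)\mb{d}$ and $\|I-\alpha H\|_2\leq\max\{|1-\alpha\mu|,|1-\alpha L|\}=1-\mu\alpha$ for $0<\alpha\leq 1/L$. In the scalar case $p=1$ assumed throughout the paper this is simply the difference quotient $(f'(\mb{x})-f'(\mb{x}^*))/(\mb{x}-\mb{x}^*)\in[\mu,L]$, and the result is immediate.
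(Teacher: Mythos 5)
Your proof is correct. The paper does not actually prove this lemma---it is listed among the preliminaries as a standard fact with a pointer to the literature (Nesterov's book)---and your argument is precisely the canonical one that citation refers to: expand $\left\|\mb{x}-\mb{x}^*-\alpha\nabla f(\mb{x})\right\|^2$, apply the coercivity inequality $\langle\nabla f(\mb{x}),\mb{x}-\mb{x}^*\rangle\geq\frac{\mu L}{\mu+L}\|\mb{x}-\mb{x}^*\|^2+\frac{1}{\mu+L}\|\nabla f(\mb{x})\|^2$, use $\alpha\leq 1/L\leq 2/(\mu+L)$ to make the gradient-norm coefficient nonpositive, and absorb it via $\|\nabla f(\mb{x})\|\geq\mu\|\mb{x}-\mb{x}^*\|$ to land exactly on $(1-\mu\alpha)^2\|\mb{x}-\mb{x}^*\|^2$ (the square root is legitimate since $\mu\alpha\leq\mu/L\leq 1$). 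Your diagnosis that the naive bound $1-2\mu\alpha+\alpha^2L^2$ is insufficient, and the alternative Hessian/integral-mean-value argument, are both also correct.
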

	
	With the help of these Lemmas, we now proceed with the convergence analysis of~\textbf{\texttt{GT-SAGA}}.
	\subsection{Auxiliary Results}
	Following~\cite{DSGT_Pu,DSGT_Xin}, we first derive a contraction + perturbation bound for the consensus error~$\left\|\mb{x}^{k}-\mb{1}_n\ol{\mb{x}}^{k}\right\|^2$.
	\begin{lem}\label{saga_1}
		$\forall k \geq0$,
		$\mathbb{E}\left[\left\|\mb{x}^{k+1}-\mb{1}_n\ol{\mb{x}}^{k+1}\right\|^2|\mc{F}^k\right]\leq
		\frac{1+\sigma^2}{2}\left\|\mb{x}^k-\mb{1}_n\ol{\mb{x}}^k\right\|^2 + \frac{2\alpha^2}{1-\sigma^2}\mathbb{E}\left[\left\|\mb{y}^k-\mb{1}_n\ol{\mb{y}}^k\right\|^2|\mc{F}^k\right].$
	\end{lem}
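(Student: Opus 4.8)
The plan is to reduce the post-update consensus error to a contractive term plus a step-scaled gradient-tracker perturbation, using only the double stochasticity of $W$ and the contraction property already established. First I would track the running average: since $W$ is doubly-stochastic, $\mb{1}_n^\top W=\mb{1}_n^\top$, so left-multiplying the estimate update~\eqref{av} by $\frac{1}{n}\mb{1}_n^\top$ yields $\ol{\mb{x}}^{k+1}=\ol{\mb{x}}^k-\alpha\ol{\mb{y}}^k$. Subtracting $\mb{1}_n\ol{\mb{x}}^{k+1}=\mb{1}_n\ol{\mb{x}}^k-\alpha\mb{1}_n\ol{\mb{y}}^k$ from~\eqref{av} and using $\mb{1}_n\ol{\mb{x}}^k=W_\infty\mb{x}^k$, I would obtain the clean decomposition
\begin{align*}
\mb{x}^{k+1}-\mb{1}_n\ol{\mb{x}}^{k+1}=\big(W\mb{x}^k-W_\infty\mb{x}^k\big)-\alpha\big(\mb{y}^k-\mb{1}_n\ol{\mb{y}}^k\big),
\end{align*}
in which the first term is the image of the current consensus error under $W$ and the second is the tracker disagreement.

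Next I would square and bound. Applying the elementary inequality $\|\mb{a}-\mb{b}\|^2\leq(1+\eta)\|\mb{a}\|^2+(1+\eta^{-1})\|\mb{b}\|^2$ (valid for any $\eta>0$, and obtained by bounding the cross term with Young's inequality) to the two terms, and then invoking the $W$-contraction property $\|W\mb{x}^k-W_\infty\mb{x}^k\|\leq\sigma\|\mb{x}^k-\mb{1}_n\ol{\mb{x}}^k\|$ on the first, I would get, for every realization,
\begin{align*}
\big\|\mb{x}^{k+1}-\mb{1}_n\ol{\mb{x}}^{k+1}\big\|^2\leq(1+\eta)\sigma^2\big\|\mb{x}^k-\mb{1}_n\ol{\mb{x}}^k\big\|^2+(1+\eta^{-1})\alpha^2\big\|\mb{y}^k-\mb{1}_n\ol{\mb{y}}^k\big\|^2.
\end{align*}
The one real choice is the free parameter $\eta$: taking $\eta=\frac{1-\sigma^2}{2\sigma^2}$ makes $(1+\eta)\sigma^2=\frac{1+\sigma^2}{2}$ exactly and gives $1+\eta^{-1}=\frac{1+\sigma^2}{1-\sigma^2}$, which I would then relax via $1+\sigma^2\leq2$ to the stated coefficient $\frac{2}{1-\sigma^2}$.

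Finally I would take $\mathbb{E}[\,\cdot\,|\mc{F}^k]$ of this pointwise inequality. The point to watch is measurability: $\mb{x}^k$ is $\mc{F}^k$-measurable, so its term passes through the conditional expectation unchanged, whereas $\mb{y}^k$ depends on the fresh draws $\{s_i^k\}$ and must stay inside the expectation — which is exactly why the claimed bound carries $\mathbb{E}[\,\cdot\,|\mc{F}^k]$ on the $\mb{y}$-term but not on the $\mb{x}$-term. I do not anticipate a genuine obstacle; this is a standard gradient-tracking consensus estimate, and the only care required is tuning $\eta$ to land on the constant $\frac{1+\sigma^2}{2}$ and keeping track of which iterates are frozen given $\mc{F}^k$.
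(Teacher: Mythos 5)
Your proposal is correct and follows essentially the same route as the paper: the paper expands the same decomposition $\left(W\mb{x}^k - W_\infty\mb{x}^k\right)-\alpha\left(\mb{y}^k-W_\infty\mb{y}^k\right)$, bounds the cross term by Young's inequality with exactly the weight that yields $\frac{1+\sigma^2}{2}$ and $\frac{1+\sigma^2}{1-\sigma^2}\alpha^2$, and then relaxes $1+\sigma^2<2$ before conditioning on $\mc{F}^k$. Your packaging via the $(1+\eta)$, $(1+\eta^{-1})$ inequality with $\eta=\frac{1-\sigma^2}{2\sigma^2}$ is the same computation in a tidier form, and your measurability remarks match the paper's final step.
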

	\begin{proof}
		Following from~\eqref{av}, we have
		\begin{align*}
		&\left\|\mb{x}^{k+1}-\mb{1}_n\ol{\mb{x}}^{k+1}\right\|^2 = \left\|W\mb{x}^k - \alpha\mb{y}^k-W_\infty\left(W\mb{x}^k - \alpha\mb{y}^k\right)\right\|^2 \\
		=& \left\|W\mb{x}^k - W_\infty\mb{x}^k\right\|^2 + \alpha^2\left\|\mb{y}^k - W_\infty\mb{y}^k\right\|^2
		- 2\alpha\Big\langle W\mb{x}^k -W_\infty\mb{x}^k, \mb{y}^k -W_\infty\mb{y}^k \Big\rangle \\
		=&~\sigma^2\left\|\mb{x}^k - W_\infty\mb{x}^k\right\|^2 + \alpha^2\left\|\mb{y}^k - W_\infty\mb{y}^k\right\|^2
		+2\sigma\left\|\mb{x}^k - W_\infty\mb{x}^k\right\|\alpha\left\|\mb{y}^k - W_\infty\mb{y}^k\right\| \\
		\leq&~\sigma^2\left\|\mb{x}^k - W_\infty\mb{x}^k\right\|^2 + \alpha^2\left\|\mb{y}^k - W_\infty\mb{y}^k\right\|^2 + \sigma\left(\frac{1-\sigma^2}{2\sigma}\left\|\mb{x}^k - W_\infty\mb{x}^k\right\|^2
		+ \frac{2\sigma}{1-\sigma^2}\alpha^2\left\|\mb{y}^k - W_\infty\mb{y}^k\right\|^2\right),\\
		=&~\frac{1+\sigma^2}{2}\left\|\mb{x}^k - W_\infty\mb{x}^k\right\|^2+\alpha^2\left(1+\frac{2\sigma^2}{1-\sigma^2}\right)\left\|\mb{y}^k - W_\infty\mb{y}^k\right\|^2
		\end{align*}
		and the proof follows from~$1+\sigma^2<2$ and taking the conditional expectation given~$\mc{F}^k$.
	\end{proof}
	
	The next Lemma derives a contraction + perturbation bound for the optimality gap of the variables~$\mb{z}_{i,j}^k$. 
	\begin{lem}
		We define~$\mb{t}_i^k$ and~$\mb{t}^k$ as follows:
		\begin{align*}
		\mb{t}_i^k \triangleq \frac{1}{m_i}\sum_{j=1}^{m_i}\left\|\mb{z}_{i,j}^k-\mb{x}^*\right\|^2,\qquad\mb{t}^k \triangleq \frac{1}{n}\sum_{i=1}^{n}\mb{t}_i^k.
		\end{align*}
		We define~$M\triangleq\max\{m_i\}$ and~$m\triangleq\min\{m_i\}$. Then the following holds:
		\begin{align*}
		\mathbb{E}\left[\mb{t}^{k+1}|\mc{F}^k\right]\leq
		\left(1-\frac{1}{M}\right)\mb{t}^k + \frac{2}{mn}\left\|\mb{x}^k-\mb{1}_n\ol{\mb{x}}^k\right\|^2
		+ \frac{2}{m}\left\|\ol{\mb{x}}^k-\mb{x}^*\right\|^2,
		\qquad\forall k\geq 0.
		\end{align*}
	\end{lem}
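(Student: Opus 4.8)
The plan is to push the single fresh random draw at each node through the table‑update rule for the auxiliary variables, take its conditional expectation, and then aggregate over components and nodes. First I would read off from Algorithm~1 the one‑step recursion for the $\mb{z}$ variables: shifting the stated update line back by one iteration, $\mb{z}_{i,j}^{k+1}=\mb{x}_i^k$ when $s_i^k=j$ and $\mb{z}_{i,j}^{k+1}=\mb{z}_{i,j}^k$ otherwise. The crucial observation is that, conditioned on $\mc{F}^k$, both $\mb{x}_i^k$ and $\mb{z}_{i,j}^k$ are deterministic, so the only randomness entering $\mb{z}_{i,j}^{k+1}$ is the uniform draw $s_i^k\in\{1,\dots,m_i\}$, which hits a fixed index $j$ with probability $1/m_i$.

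With this in hand the per‑component conditional expectation is a two‑point mixture:
\[
\mathbb{E}\left[\|\mb{z}_{i,j}^{k+1}-\mb{x}^*\|^2\,\big|\,\mc{F}^k\right]
=\frac{1}{m_i}\|\mb{x}_i^k-\mb{x}^*\|^2+\left(1-\frac{1}{m_i}\right)\|\mb{z}_{i,j}^k-\mb{x}^*\|^2.
\]
Averaging over $j=1,\dots,m_i$ collapses the second term into $\mb{t}_i^k$ by definition and leaves $\tfrac{1}{m_i}\|\mb{x}_i^k-\mb{x}^*\|^2$ from the first, yielding the node‑level recursion $\mathbb{E}[\mb{t}_i^{k+1}\,|\,\mc{F}^k]=(1-\tfrac{1}{m_i})\mb{t}_i^k+\tfrac{1}{m_i}\|\mb{x}_i^k-\mb{x}^*\|^2$.

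Finally I would average over the $n$ nodes and replace the node‑dependent factors by their worst cases using $m\leq m_i\leq M$, namely $1-\tfrac{1}{m_i}\leq 1-\tfrac{1}{M}$ and $\tfrac{1}{m_i}\leq\tfrac{1}{m}$; this produces the contraction coefficient on $\mb{t}^k$ and the prefactor $\tfrac{1}{mn}\sum_i\|\mb{x}_i^k-\mb{x}^*\|^2$. To split the remaining sum into the two claimed terms, I write $\mb{x}_i^k-\mb{x}^*=(\mb{x}_i^k-\ol{\mb{x}}^k)+(\ol{\mb{x}}^k-\mb{x}^*)$ and apply $\|a+b\|^2\leq 2\|a\|^2+2\|b\|^2$, so that $\sum_i\|\mb{x}_i^k-\mb{x}^*\|^2\leq 2\|\mb{x}^k-\mb{1}_n\ol{\mb{x}}^k\|^2+2n\|\ol{\mb{x}}^k-\mb{x}^*\|^2$ (using $p=1$); dividing by $mn$ gives exactly the consensus‑error and optimality‑gap terms in the statement.

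I expect the only genuine obstacle to be the bookkeeping of time indices: the table‑update line writes $\mb{z}_{i,j}^{k+2}$ in terms of $s_i^{k+1}$, so one must carefully shift indices to confirm that the transition from $\mb{z}_{i,j}^k$ to $\mb{z}_{i,j}^{k+1}$ is governed by $s_i^k$, the fresh randomness measurable with respect to $\mc{F}^{k+1}$ but not $\mc{F}^k$, and hence that $\mb{x}_i^k$ and $\mb{z}_{i,j}^k$ act as constants under $\mathbb{E}[\,\cdot\,|\,\mc{F}^k]$. Once this is settled the argument is a routine mixture computation followed by two elementary inequalities; notably, neither Assumption~\ref{asp1} nor Assumption~\ref{asp2} is invoked in this particular bound.
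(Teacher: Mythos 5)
Your proposal is correct and follows essentially the same route as the paper: the two-point mixture for $\mathbb{E}\bigl[\|\mb{z}_{i,j}^{k+1}-\mb{x}^*\|^2\,\big|\,\mc{F}^k\bigr]$, averaging over $j$ to get the node-level recursion, the worst-case bounds $1-\tfrac{1}{m_i}\leq 1-\tfrac{1}{M}$ and $\tfrac{1}{m_i}\leq\tfrac{1}{m}$, and the split $\|\mb{x}_i^k-\mb{x}^*\|^2\leq 2\|\mb{x}_i^k-\ol{\mb{x}}^k\|^2+2\|\ol{\mb{x}}^k-\mb{x}^*\|^2$ followed by averaging over $i$. Your extra care with the index shift (that the transition from $\mb{z}_{i,j}^k$ to $\mb{z}_{i,j}^{k+1}$ is driven by $s_i^k$, which is independent of $\mc{F}^k$) is exactly the fact the paper asserts without derivation, so nothing is missing.
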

	\begin{proof}
		We note that~$\mb{z}_{i,j}^{k+1} = \mb{z}_{i,j}^{k}$ with probability~$1-\frac{1}{m_i}$ and~$\mb{z}_{i,j}^{k+1} = \mb{x}_{i}^{k}$ with probability~$\frac{1}{m_i}$, given~$\mc{F}^k$.
		\begin{align*}
		\mathbb{E}\left[\mb{t}_i^{k+1}|\mc{F}^k\right]
		=&~ \frac{1}{m_i}\sum_{j=1}^{m_i}\mathbb{E}\left[\left\|\mb{z}_{i,j}^{k+1}-\mb{x}^*\right\|^2|\mc{F}^k\right]\nonumber\\
		=&~ \frac{1}{m_i}\sum_{j=1}^{m_i}\left(\left(1-\frac{1}{m_i}\right)\left\|\mb{z}_{i,j}^{k}-\mb{x}^*\right\|^2+\frac{1}{m_i}\left\|\mb{x}_i^k-\mb{x}^*\right\|^2\right)\\
		=&~
		\left(1-\frac{1}{m_i}\right)\mb{t}_i^k + \frac{1}{m_i}\left\|\mb{x}_i^k-\mb{x}^*\right\|^2
		\nonumber\\
		\leq&~
		\left(1-\frac{1}{M}\right)\mb{t}_i^k + \frac{1}{m}\left\|\mb{x}_i^k-\mb{x}^*\right\|^2\\
		\leq&~
		\left(1-\frac{1}{M}\right)\mb{t}_i^k + \frac{2}{m}\left\|\mb{x}_i^k-\ol{\mb{x}}^k\right\|^2
		+ \frac{2}{m}\left\|\ol{\mb{x}}^k-\mb{x}^*\right\|^2
		\end{align*}
		Averaging the above over~$i$ finishes the proof.
	\end{proof}
	
	The next Lemma provides a contraction + consensus perturbation + variance bound for~$\left\|\ol{\mb x}^{k+1}-\mb{x}^*\right\|^2$.
	\begin{lem}\label{x-x*}
		Bound the optimality gap as follows.
		\begin{align*}
		&\mathbb{E}\left[\left\|\ol{\mb x}^{k+1}-\mb{x}^*\right\|^2 | \mc{F}_{k}\right] \nonumber\\
		=& \left\|\ol{\mb x}^{k}-\alpha\nabla f(\ol{\mb x}^{k})-\mb{x}^* \right\|^2 + 2\alpha\Big\langle \ol{\mb x}^{k}-\alpha\nabla f(\ol{\mb x}^{k})-\mb{x}^*, \nabla f(\ol{\mb x}^{k})-\mb{h}(\mb{x}^k)\Big\rangle+\alpha^2\left\|\nabla f(\ol{\mb x}^{k})
		-\mb{h}(\mb{x}^k)\right\|^2. \nonumber\\
		&+\frac{\alpha^2}{n^2}\mathbb{E}\left[\left\| \mb{g}^k-\nabla\mb{f}(\mb{x}^k)\right\|^2 
		\Big| \mc{F}^k\right].
		\end{align*}
	\end{lem}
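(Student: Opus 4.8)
The plan is to begin by averaging the estimate update~\eqref{av}. Since~$W$ is doubly-stochastic,~$\frac{1}{n}\mb{1}_n^\top W = \frac{1}{n}\mb{1}_n^\top$, so left-multiplying~\eqref{av} by~$\frac{1}{n}\mb{1}_n^\top$ yields the scalar recursion~$\ol{\mb x}^{k+1} = \ol{\mb x}^k - \alpha\ol{\mb y}^k$, hence~$\ol{\mb x}^{k+1}-\mb{x}^* = \ol{\mb x}^k - \alpha\ol{\mb y}^k - \mb{x}^*$. The central idea is to split the stochastic step direction~$\ol{\mb y}^k$ into its conditional mean and a zero-mean fluctuation. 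By Lemma~\ref{p3} we have~$\mathbb{E}[\ol{\mb y}^k\mid\mc{F}^k]=\mb{h}(\mb{x}^k)$, so I would write
\begin{align*}
\ol{\mb x}^{k+1}-\mb{x}^* = \big(\ol{\mb x}^k - \alpha\mb{h}(\mb{x}^k) - \mb{x}^*\big) - \alpha\big(\ol{\mb y}^k - \mb{h}(\mb{x}^k)\big),
\end{align*}
where the first bracket is~$\mc{F}^k$-measurable and the second has zero conditional mean given~$\mc{F}^k$.

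Second, I would expand~$\|\ol{\mb x}^{k+1}-\mb{x}^*\|^2$ from this splitting and take~$\mathbb{E}[\,\cdot\mid\mc{F}^k]$. The cross term vanishes: the~$\mc{F}^k$-measurable factor pulls out of the conditional expectation and multiplies~$\mathbb{E}[\ol{\mb y}^k-\mb{h}(\mb{x}^k)\mid\mc{F}^k]=\bfzero$. This leaves the exact identity
\begin{align*}
\mathbb{E}\big[\|\ol{\mb x}^{k+1}-\mb{x}^*\|^2\mid\mc{F}^k\big] = \big\|\ol{\mb x}^k - \alpha\mb{h}(\mb{x}^k) - \mb{x}^*\big\|^2 + \alpha^2\,\mathbb{E}\big[\|\ol{\mb y}^k-\mb{h}(\mb{x}^k)\|^2\mid\mc{F}^k\big].
\end{align*}
To recover the three deterministic terms in the statement, I would add and subtract~$\alpha\nabla f(\ol{\mb x}^k)$ inside the first norm and expand~$\|(\ol{\mb x}^k - \alpha\nabla f(\ol{\mb x}^k)-\mb{x}^*) + \alpha(\nabla f(\ol{\mb x}^k)-\mb{h}(\mb{x}^k))\|^2$ via~$\|\mb a+\mb b\|^2=\|\mb a\|^2+2\langle\mb a,\mb b\rangle+\|\mb b\|^2$; this is purely algebraic and reproduces the first three terms verbatim.

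The last and most delicate step is converting the variance term into the stated form~$\frac{\alpha^2}{n^2}\mathbb{E}[\|\mb{g}^k-\nabla\mb{f}(\mb{x}^k)\|^2\mid\mc{F}^k]$. By Lemma~\ref{p2} we have~$\ol{\mb y}^k=\ol{\mb g}^k$, so by the definition of~$\mb{h}$,~$\ol{\mb y}^k-\mb{h}(\mb{x}^k)=\frac{1}{n}\mb{1}_n^\top(\mb{g}^k-\nabla\mb{f}(\mb{x}^k))$ and~$\|\ol{\mb y}^k-\mb{h}(\mb{x}^k)\|^2=\frac{1}{n^2}\|\mb{1}_n^\top(\mb{g}^k-\nabla\mb{f}(\mb{x}^k))\|^2$, which expands into diagonal plus off-diagonal terms over the nodes~$i$. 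The key observation—the crux of the argument—is that the samples~$\{s_i^k\}_{i\in\mc{V}}$ are mutually independent given~$\mc{F}^k$, so for~$i\neq i'$ the cross term factorizes and each factor vanishes by Lemma~\ref{p1}, i.e.,~$\mathbb{E}[\mb{g}_i^k-\nabla f_i(\mb{x}_i^k)\mid\mc{F}^k]=\bfzero$. Only the diagonal~$i=i'$ terms survive, giving~$\mathbb{E}[\|\mb{1}_n^\top(\mb{g}^k-\nabla\mb{f}(\mb{x}^k))\|^2\mid\mc{F}^k]=\mathbb{E}[\|\mb{g}^k-\nabla\mb{f}(\mb{x}^k)\|^2\mid\mc{F}^k]$, which collapses the squared-sum into a genuine sum of per-node variances and yields exactly the coefficient~$\frac{\alpha^2}{n^2}$. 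I expect this node-independence bookkeeping to be the main place where care is needed, since it is precisely what prevents the decentralized noise from scaling like a squared sum across the~$n$ nodes.
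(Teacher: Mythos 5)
Your proposal is correct and follows essentially the same route as the paper: both reduce to $\ol{\mb x}^{k+1}=\ol{\mb x}^{k}-\alpha\ol{\mb y}^{k}$, isolate the zero-mean fluctuation $\ol{\mb y}^k-\mb{h}(\mb{x}^k)$ so the cross term vanishes under $\mathbb{E}[\,\cdot\,|\mc{F}^k]$, add and subtract $\alpha\nabla f(\ol{\mb x}^k)$ to produce the three deterministic terms, and use the conditional independence of $\{\mb{g}_i^k\}$ together with Lemma~\ref{p1} to collapse the variance to $\frac{\alpha^2}{n^2}\mathbb{E}[\|\mb{g}^k-\nabla\mb{f}(\mb{x}^k)\|^2|\mc{F}^k]$. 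The only difference is the order of the two decompositions (you split off the martingale fluctuation before the algebraic add-and-subtract, while the paper does the reverse), which is immaterial.
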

	\begin{proof}
		Multiplying~$\frac{1}{n}\mb{1}_n^\top$ to bothsides of~\eqref{av}, we have~$\ol{\mb x}^{k+1}=\ol{\mb x}^{k}-\alpha\ol{\mb y}^{k}$. We next expand~$\left\|\ol{\mb x}^{k+1}-\mb{x}^*\right\|^2$.
		\begin{align*}
		&\left\|\ol{\mb x}^{k+1}-\mb{x}^*\right\|^2 = \left\|\ol{\mb x}^{k}-\alpha\ol{\mb y}^{k}-\mb{x}^*\right\|^2\\
		=& \left\|\ol{\mb x}^{k}-\alpha\nabla f(\ol{\mb x}^{k})-\mb{x}^*+\alpha\left(\nabla f(\ol{\mb x}^{k})-\ol{\mb y}^{k}\right)\right\|^2 \\
		=& \left\|\ol{\mb x}^{k}-\alpha\nabla f(\ol{\mb x}^{k})-\mb{x}^* \right\|^2 + 2\alpha\Big\langle \ol{\mb x}^{k}-\alpha\nabla f(\ol{\mb x}^{k})-\mb{x}^*, \nabla f(\ol{\mb x}^{k})-\ol{\mb y}^{k}\Big\rangle + \alpha^2 \left\|\nabla f(\ol{\mb x}^{k})-\ol{\mb y}^{k}\right\|^2. 
		\end{align*}
		Recall that~$\mathbb{E}\left[\ol{\mb{y}}^k|\mc{F}_k\right] = \mb{h}(\mb{x}^k)$ from Lemma~\ref{p3}. We take the expectation from bothsides given~$\mc{F}^k$ to obtain:
		\begin{align}\label{d1}
		\mathbb{E}\left[\left\|\ol{\mb x}^{k+1}-\mb{x}^*\right\|^2 | \mc{F}^k\right] 
		=& \left\|\ol{\mb x}^{k}-\alpha\nabla f(\ol{\mb x}^{k})-\mb{x}^* \right\|^2 + 2\alpha\Big\langle \ol{\mb x}^{k}-\alpha\nabla f(\ol{\mb x}^{k})-\mb{x}^*, \nabla f(\ol{\mb x}^{k})-\mb{h}(\mb{x}^k)\Big\rangle
		\nonumber\\
		&+ \alpha^2 \mathbb{E}\left[\left\|\nabla f(\ol{\mb x}^{k})-\ol{\mb y}^{k}\right\|^2 | \mc{F}^k\right].
		\end{align}
		We split the last term above~$\left\|\nabla f(\ol{\mb x}^{k})-\ol{\mb y}^{k}\right\|^2$ as consensus error + variance as follows. 
		\begin{align}\label{e1}
		&\mathbb{E}\left[\left\|\nabla f(\ol{\mb x}^{k})-\ol{\mb y}^{k}\right\|^2 | \mc{F}^{k}\right]\nonumber\\
		=& \mathbb{E}\left[\left\|\nabla f(\ol{\mb x}^{k})
		-\mb{h}(\mb{x}^k) + \mb{h}(\mb{x}^k)
		-\ol{\mb y}^{k}\right\|^2 | \mc{F}^{k}\right] \nonumber\\
		=& \underbrace{\left\|\nabla f(\ol{\mb x}^{k})
			-\mb{h}(\mb{x}^k)\right\|^2 }_{\text{consensus error}} 
		+ \underbrace{\mathbb{E}\left[\left\|\mb{h}(\mb{x}^k)
			-\ol{\mb y}^{k}\right\|^2 \big| \mc{F}^{k}\right]}_{\text{variance}}
		+ \underbrace{2\Big\langle \nabla f(\ol{\mb x}^{k})
			-\mb{h}(\mb{x}^k), \mathbb{E}\left[\mb{h}(\mb{x}^k)
			-\ol{\mb y}^{k}\big| \mc{F}^k \right]\Big\rangle}_{=0} 
		\end{align}
		The variance term can be simplified as follows:
		\begin{align}\label{v1}
		&~\mathbb{E}\left[\left\|\mb{h}(\mb{x}^k)
		-\ol{\mb y}^{k}\right\|^2 \Big| \mc{F}^{k}\right] \nonumber\\
		=&~\mathbb{E}\left[\left\|
		\frac{1}{n}\sum_{i=1}^{n}\left( \nabla f_i(\mb{x}_i^k)-\mb{g}_i^k\right)
		\right\|^2 \Bigg| \mc{F}^{k}\right] 
		= \frac{1}{n^2}\mathbb{E}\left[\left\|
		\sum_{i=1}^{n}\left(\nabla f_i(\mb{x}_i^k)-\mb{g}_i^k\right)
		\right\|^2 \Bigg| \mc{F}^{k}\right] \nonumber\\
		=&~\frac{1}{n^2}\mathbb{E}\left[\sum_{i=1}^{n}\left\|
		f_i'(\mb{x}_i^k)-\mb{g}_i^k\right\|^2 
		+\sum_{i\neq j}\Big\langle f_i'(\mb{x}_i^k)-\mb{g}_i^k, f_j'(\mb{x}_j^k)-\mb{g}_j^k\Big\rangle 
		\Bigg| \mc{F}^{k}\right] \nonumber\\
		=&~\frac{1}{n^2}\sum_{i=1}^{n}\mathbb{E}\left[\left\|
		f_i'(\mb{x}_i^k)-\mb{g}_i^k\right\|^2 
		\Big| \mc{F}^{k}\right]
		=\frac{1}{n^2}\mathbb{E}\left[\left\|\mb{g}^k-\nabla\mb{f}(\mb{x}^k)\right\|^2\Big|\mc{F}^k\right]
		,	
		\end{align}                                                   
		where the second last equality is due to the fact that~$\{\mb{g}_i^k\}$ are independent with each other given~$\mc{F}^k$. Using~\eqref{v1} and~\eqref{e1} in~\eqref{d1} finishes the proof.
	\end{proof}
	
	Following a similar procedure in SAGA~\cite{SAGA}, we bound the variance~$\mathbb{E}\left[\left\| \mb{g}_i^k-\nabla f_i(\mb{x}_i^k)\right\|^2 
	\Big| \mc{F}^{k}\right]$ as follows.
	\begin{lem}\label{v3}
		The following holds:
		\begin{align}
		\mathbb{E}\left[\left\| \mb{g}_i^k-\nabla f_i(\mb{x}_i^k)\right\|^2 
		\Big| \mc{F}^{k}\right] 
		\leq&
		\left(1+\beta\right)\frac{1}{m_i}\sum_{j=1}^{m_i}\left\| \nabla f_{i,j}(\mb{x}_{i}^{k})
		- \nabla f_{i,j}(\mb{x}^*)\right\|^2 
		-\left(1+\beta\right)\left\| \nabla f_i(\mb{x}_i^k)-\nabla f_i(\mb{x}^*)\right\|^2  \nonumber\\
		&+ \left(1+\beta^{-1}\right)\frac{1}{m_i}\sum_{j=1}^{m_i}\left\| \nabla f_{i,j}(\mb{z}_{i,j}^{k}) - \nabla f_{i,j}(\mb{x}^*)\right\|^2
		\quad\forall k\geq0.
		\end{align}
	\end{lem}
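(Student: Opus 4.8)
The plan is to follow the standard SAGA second-moment decomposition, specialized to the local estimator $\mb{g}_i^k$. Write $s \equiv s_i^k$ and recall that, given $\mc{F}^k$, the points $\mb{x}_i^k$ and $\{\mb{z}_{i,j}^k\}_{j=1}^{m_i}$ are deterministic, so the only randomness in $\mb{g}_i^k$ is the uniform draw of $s$ over $\{1,\dots,m_i\}$. Since the batch term $\frac{1}{m_i}\sum_{j=1}^{m_i}\nabla f_{i,j}(\mb{z}_{i,j}^k)$ is $\mc{F}^k$-measurable and, by Lemma~\ref{p1}, $\mathbb{E}[\mb{g}_i^k|\mc{F}^k]=\nabla f_i(\mb{x}_i^k)$, the centered estimator collapses to $\mb{g}_i^k-\nabla f_i(\mb{x}_i^k)=X-\mathbb{E}[X|\mc{F}^k]$, where $X\triangleq\nabla f_{i,s}(\mb{x}_i^k)-\nabla f_{i,s}(\mb{z}_{i,s}^k)$. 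This reduces the lemma to bounding a variance under uniform sampling.

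First I would insert $\nabla f_{i,s}(\mb{x}^*)$ and split $X=a_s-b_s$ with $a_s\triangleq\nabla f_{i,s}(\mb{x}_i^k)-\nabla f_{i,s}(\mb{x}^*)$ and $b_s\triangleq\nabla f_{i,s}(\mb{z}_{i,s}^k)-\nabla f_{i,s}(\mb{x}^*)$, so that $\mb{g}_i^k-\nabla f_i(\mb{x}_i^k)=\big(a_s-\mathbb{E}[a_s|\mc{F}^k]\big)-\big(b_s-\mathbb{E}[b_s|\mc{F}^k]\big)$. Next I would apply the Young-type inequality $\|u-v\|^2\leq(1+\beta)\|u\|^2+(1+\beta^{-1})\|v\|^2$ pointwise in $s$, taking $u$ to be the centered $a_s$ and $v$ the centered $b_s$, and only then pass to $\mathbb{E}[\,\cdot\,|\mc{F}^k]$. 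Applying Young before averaging is what lets me avoid having to track the cross term in $X-\mathbb{E}[X]$.

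The two resulting expectations are then handled by the identity $\mathbb{E}\|Y-\mathbb{E}Y\|^2=\mathbb{E}\|Y\|^2-\|\mathbb{E}Y\|^2$. For the $a_s$ term I would retain the equality: uniform sampling gives $\mathbb{E}[\|a_s\|^2|\mc{F}^k]=\frac{1}{m_i}\sum_{j=1}^{m_i}\|\nabla f_{i,j}(\mb{x}_i^k)-\nabla f_{i,j}(\mb{x}^*)\|^2$ and $\mathbb{E}[a_s|\mc{F}^k]=\nabla f_i(\mb{x}_i^k)-\nabla f_i(\mb{x}^*)$, producing exactly the first (positive) and second (negative) terms of the claim after multiplication by $(1+\beta)$. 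For the $b_s$ term I would discard the non-positive contribution $-\|\mathbb{E}[b_s|\mc{F}^k]\|^2$, yielding the upper bound $\frac{1}{m_i}\sum_{j=1}^{m_i}\|\nabla f_{i,j}(\mb{z}_{i,j}^k)-\nabla f_{i,j}(\mb{x}^*)\|^2$, which matches the last term after multiplication by $(1+\beta^{-1})$.

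I expect the only real subtlety to be the bookkeeping rather than any analytic difficulty: choosing the decomposition $X=a_s-b_s$ so that \emph{both} $a_s$ and $b_s$ are centered around precisely the quantities that appear in the bound, and recognizing the asymmetry in how the two variance identities are used—the negative term $-(1+\beta)\|\nabla f_i(\mb{x}_i^k)-\nabla f_i(\mb{x}^*)\|^2$ must be kept from the $a_s$ computation, whereas the analogous negative term for $b_s$ is thrown away to obtain a clean upper bound. Notably, neither strong convexity (Assumption~\ref{asp1}) nor smoothness (Assumption~\ref{asp2}) is needed here; the statement is a pure second-moment computation for the uniformly sampled SAGA gradient.
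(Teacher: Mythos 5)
Your proposal is correct and follows essentially the same route as the paper: the same centering $\mb{g}_i^k-\nabla f_i(\mb{x}_i^k)=X-\mathbb{E}[X|\mc{F}^k]$, the same insertion of $\nabla f_{i,s}(\mb{x}^*)$ to split into the two centered variables (the paper's $X_i^k$, $Y_i^k$ are your $a_s$, $b_s$), the same Young inequality followed by the variance identity, keeping the negative term for the first and discarding it for the second. No gaps.
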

	\begin{proof} 
		We define~$\mb{d}_i^k\triangleq\frac{1}{m_i}\sum_{j=1}^{m_i}\nabla f_{i,j}(\mb{z}_{i,j}^{k})$. The key is to use the standard variance decomposition.  
		\begin{align}\label{v2}
		&\mathbb{E}\left[\left\|\mb{g}_i^k-\nabla f_i(\mb{x}_i^k)\right\|^2 
		| \mc{F}^{k}\right] \nonumber\\
		=&~\mathbb{E}\left[\left\|
		\nabla f_{i,s_i^{k}}(\mb{x}_{i}^{k}) - \nabla f_{i,s_i^{k}}(\mb{z}_{i,s_i^{k}}^{k}) + \mb{d}_i^k-\nabla f_i(\mb{x}_i^k)\right\|^2 
		\Bigg| \mc{F}^{k}\right] \nonumber\\
		=&~\mathbb{E}\left[\left\|
		\nabla f_{i,s_i^{k}}(\mb{x}_{i}^{k})-\nabla f_i(\mb{x}_i^k) - \left(\nabla f_{i,s_i^{k}}(\mb{z}_{i,s_i^{k}}^{k}) - \mb{d}_i^k\right)\right\|^2  
		\Bigg| \mc{F}^{k}\right] \nonumber\\
		=&~\mathbb{E}\Bigg[\Bigg\|
		\Bigg(\underbrace{\nabla f_{i,s_i^{k}}(\mb{x}_{i}^{k})
			- \nabla f_{i,s_i^{k}}(\mb{x}^*)}_{X_i^k}
		-\underbrace{\left( \nabla f_i(\mb{x}_i^k)-\nabla f_i(\mb{x}^*)\right)}_{\mathbb{E}X_i^k}\Bigg) \nonumber\\
		&\qquad\qquad\qquad-
		\Bigg(\underbrace{ \nabla f_{i,s_i^{k}}(\mb{z}_{i,s_i^{k}}^{k}) - \nabla f_{i,s_i^{k}}(\mb{x}^*)}_{Y_i^k}- \underbrace{\left(\mb{d}_i^k-\nabla f_i(\mb{x}^*)\right)}_{\mathbb{E}Y_i^k}\Bigg)\Bigg\|^2  
		\Bigg| \mc{F}^{k}\Bigg] \nonumber
		\end{align}
		We use the inequality~$\|\mb{x}+\mb{y}\|^2\leq(1+\beta)\|\mb{x}\|^2+(1+\beta^{-1})\|\mb{y}\|^2, \forall\beta>0$, and the standard variance decomposition~$\mathbb{E}\|X-\mathbb{E}X\|^2 = \mathbb{E}\|X\|^2 - \|\mathbb{E}X\|^2$ to proceed.
		\begin{align}
		&\mathbb{E}\left[\left\|\mb{g}_i^k-\nabla f_i(\mb{x}_i^k)\right\|^2 
		| \mc{F}_{k}\right] \nonumber\\
		\leq&
		\left(1+\beta\right)
		\mathbb{E}\left[\left\| f_{i,s_i^{k}}(\mb{x}_{i}^{k})
		- \nabla f_{i,s_i^{k}}(\mb{x}^*)
		-\left( \nabla f_i(\mb{x}_i^k)-\nabla f_i(\mb{x}^*)\right)\right\|^2 \Bigg | \mc{F}^k \right]
		\nonumber\\
		&\qquad\qquad+ \left(1+\beta^{-1}\right)\mathbb{E}\left[\left\| 
		\nabla f_{i,s_i^{k}}(\mb{z}_{i,s_i^{k}}^{k}) - \nabla f_{i,s_i^{k}}(\mb{x}^*) - \left(\mb{d}_i^k-\nabla f_i(\mb{x}^*)\right)
		\right\|^2 \Bigg |\mc{F}^k\right] \nonumber\\
		=& 
		\left(1+\beta\right)\left(\mathbb{E}\left[\left\| \nabla f_{i,s_i^{k}}(\mb{x}_{i}^{k})
		- \nabla f_{i,s_i^{k}}(\mb{x}^*)\right\|^2\Big|\mc{F}_k\right] -\left\| \nabla f_i(\mb{x}_i^k) - \nabla f_i(\mb{x}^*)\right\|^2 \right) \nonumber\\
		&\qquad\qquad+ \left(1+\beta^{-1}\right)\left(\mathbb{E}\left[\left\| \nabla f_{i,s_i^{k}}(\mb{z}_{i,s_i^{k}}^{k}) - \nabla f_{i,s_i^{k}}(\mb{x}^*)\right\|^2\Big|\mc{F}_k\right] -\left\| \mb{d}_i^k-\nabla f_i(\mb{x}^*)\right\|^2 \right)\nonumber\\
		\leq&
		\left(1+\beta\right)\frac{1}{m_i}\sum_{j=1}^{m_i}\left\| \nabla f_{i,j}(\mb{x}_{i}^{k})
		- \nabla f_{i,j}(\mb{x}^*)\right\|^2 
		-\left(1+\beta\right)\left\| \nabla f_i(\mb{x}_i^k)-\nabla f_i(\mb{x}^*)\right\|^2  \nonumber\\
		&+ \left(1+\beta^{-1}\right)\frac{1}{m_i}\sum_{j=1}^{m_i}\left\|\nabla f_{i,j}(\mb{z}_{i,j}^{k}) - \nabla f_{i,j}(\mb{x}^*)\right\|^2,
		\end{align}
		where in the last inequality we dropped the non-positive term~$-\left\|\mb{d}_i^k - f'_i(\mb{x}^*)\right\|^2$.
	\end{proof}
	Next we use the lemma above in a slightly conservative way (other ways of doing it are possible). We simply set~$\beta=1$ and drop the negative term above.
	\begin{align}\label{v'}
	\mathbb{E}\left[\left\| \mb{g}_i^k-\nabla f_i(\mb{x}_i^k)\right\|^2 
	\Big| \mc{F}^{k}\right] 
	\leq&~
	\frac{2}{m_i}\sum_{j=1}^{m_i}\left\|\nabla f_{i,j}(\mb{x}_{i}^{k})
	- \nabla f_{i,j}(\mb{x}^*)\right\|^2 
	+ \frac{2}{m_i}\sum_{j=1}^{m_i}\left\| \nabla f_{i,j}(\mb{z}_{i,j}^{k}) - \nabla f_{i,j}(\mb{x}^*)\right\|^2 \nonumber\\
	\triangleq&~I_1 + I_2.
	\end{align}
	First we bound~$I_1$. We add and subtract~$\nabla f_{i,j}(\ol{\mb{x}}^k)$:
	\begin{align}\label{I1}
	I_1 =&~\frac{2}{m_i}\sum_{j=1}^{m_i}\left\| \nabla f_{i,j}(\mb{x}_{i}^{k}) - \nabla f_{i,j}(\ol{\mb{x}}^k)
	+ \nabla f_{i,j}(\ol{\mb{x}}^k)
	- \nabla f_{i,j}(\mb{x}^*)\right\|^2 \nonumber\\
	\leq&~
	\frac{4}{m_i}\sum_{j=1}^{m_i}
	\left(
	L^2\left\|\mb{x}_{i}^{k} - \ol{\mb{x}}^k\right\|^2
	+ L^2\left\|\ol{\mb{x}}^k - \mb{x}^*\right\|^2   
	\right) \nonumber\\
	\leq&~
	4L^2\left\|\mb{x}_i^k - \ol{\mb{x}}^k\right\|^2
	+ 4L^2\left\|\ol{\mb{x}}^k - \mb{x}^*\right\|^2
	\end{align}
	Next we bound~$I_2$ as follows.
	\begin{align}\label{I2}
	I_2 = \frac{2}{m_i}\sum_{j=1}^{m_i}\left\| f_{i,j}'(\mb{z}_{i,j}^{k}) - f_{i,j}'(\mb{x}^*)\right\|^2
	\leq \frac{2L^2}{m_i}\sum_{j=1}^{m_i}\left\| \mb{z}_{i,j}^{k} - \mb{x}^*\right\|^2
	= 2L^2\mb{t}_i^k.
	\end{align}
	Using the bounds~\eqref{I1} and~\eqref{I2} in~\eqref{v'}, we obtain an upper bound for the local variance:
	$$\mathbb{E}\left[\left\| \mb{g}_i^k-\nabla f_i(\mb{x}_i^k)\right\|^2 
	\Big| \mc{F}^{k}\right] 
	\leq 4L^2\left\|\mb{x}_i^k - \ol{\mb{x}}^k\right\|^2
	+ 4L^2\left\|\ol{\mb{x}}^k - \mb{x}^*\right\|^2
	+ 2L^2\mb{t}_i^k.
	$$
	Summing the above inequality over~$i$, we obtain the following Lemma.
	\begin{lem}\label{var}
		The following holds:
		\begin{align*}
		\mathbb{E}\left[\left\| \mb{g}^k-\nabla \mb{f}(\mb{x}^k)\right\|^2 
		\Big| \mc{F}^{k}\right] 
		\leq
		4L^2\left\|\mb{x}^k-\mb{1}_n\ol{\mb{x}}^k\right\|^2
		+ 4nL^2\left\|\ol{\mb{x}}^k - \mb{x}^*\right\|^2
		+ 4nL^2\mb{t}^k,\qquad\forall k\geq 0.
		\end{align*}
	\end{lem}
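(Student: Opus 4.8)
The plan is to obtain the statement directly by summing, over the nodes $i=1,\dots,n$, the per-node variance bound that has already been assembled just above the lemma, namely
\begin{equation*}
\mathbb{E}\left[\left\|\mb{g}_i^k-\nabla f_i(\mb{x}_i^k)\right\|^2\Big|\mc{F}^k\right]\leq 4L^2\left\|\mb{x}_i^k-\ol{\mb{x}}^k\right\|^2+4L^2\left\|\ol{\mb{x}}^k-\mb{x}^*\right\|^2+2L^2\mb{t}_i^k.
\end{equation*}
This inequality is itself a consequence of Lemma~\ref{v3} with $\beta=1$ (dropping the non-positive term to get~\eqref{v'}) together with the $L$-smoothness bounds~\eqref{I1} and~\eqref{I2}, all of which I take as given.

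The one structural fact I would make explicit is that, because $\mb{g}^k$, $\nabla\mb{f}(\mb{x}^k)$, and $\mb{x}^k-\mb{1}_n\ol{\mb{x}}^k$ are all block-stacked vectors over the nodes, their squared Euclidean norms split additively across $i$. Concretely, $\left\|\mb{g}^k-\nabla\mb{f}(\mb{x}^k)\right\|^2=\sum_{i=1}^n\left\|\mb{g}_i^k-\nabla f_i(\mb{x}_i^k)\right\|^2$ and $\left\|\mb{x}^k-\mb{1}_n\ol{\mb{x}}^k\right\|^2=\sum_{i=1}^n\left\|\mb{x}_i^k-\ol{\mb{x}}^k\right\|^2$, while the definition $\mb{t}^k=\frac{1}{n}\sum_{i=1}^n\mb{t}_i^k$ gives $\sum_{i=1}^n\mb{t}_i^k=n\mb{t}^k$.

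With these identities I would apply linearity of $\mathbb{E}[\cdot|\mc{F}^k]$ to the additive decomposition of $\left\|\mb{g}^k-\nabla\mb{f}(\mb{x}^k)\right\|^2$, substitute the per-node bound, and collect terms: the first contribution yields $4L^2\left\|\mb{x}^k-\mb{1}_n\ol{\mb{x}}^k\right\|^2$; the second, being constant in $i$, is summed $n$ times to give $4nL^2\left\|\ol{\mb{x}}^k-\mb{x}^*\right\|^2$; and the third gives $\sum_{i=1}^n 2L^2\mb{t}_i^k=2nL^2\mb{t}^k$. Since no step is genuinely hard, the only point needing mild care is this last coefficient: the summation produces $2nL^2\mb{t}^k$, whereas the lemma states $4nL^2\mb{t}^k$. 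As $\mb{t}^k\geq 0$, the crude bound $2nL^2\mb{t}^k\leq 4nL^2\mb{t}^k$ closes the gap, so the stated coefficient holds; I would simply flag that this extra factor of two is deliberately conservative, consistent with the earlier practice of rounding constants upward for uniformity.
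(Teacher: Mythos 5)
Your proposal is correct and follows essentially the same route as the paper, which likewise obtains the lemma by summing the displayed per-node bound $\mathbb{E}\left[\left\|\mb{g}_i^k-\nabla f_i(\mb{x}_i^k)\right\|^2\,\big|\,\mc{F}^k\right]\leq 4L^2\left\|\mb{x}_i^k-\ol{\mb{x}}^k\right\|^2+4L^2\left\|\ol{\mb{x}}^k-\mb{x}^*\right\|^2+2L^2\mb{t}_i^k$ over $i$ and using the block structure of the stacked vectors. Your observation that the summation actually yields the sharper coefficient $2nL^2\mb{t}^k$, which is then majorized by the stated $4nL^2\mb{t}^k$ since $\mb{t}^k\geq 0$, is accurate and correctly resolves the only point the paper leaves implicit.
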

	Now we further refine Lemma~\ref{x-x*} with the help of Lemma~\ref{var}.
	\begin{lem}\label{opt_saga}
		If~$0<\alpha \leq \frac{\mu}{8L^2}$, the following holds:
		\begin{align*}
		\mathbb{E}\left[\left\|\ol{\mb x}^{k+1}-\mb{x}^*\right\|^2 | \mc{F}^{k}\right]
		\leq
		\left(1-\frac{\mu\alpha}{2}\right)\left\|\ol{\mb x}^{k}-\mb{x}^*\right\|^2
		+ \frac{3L^2\alpha}{2\mu n}
		\left\|\mb{x}^k-\mb{1}_n\ol{\mb{x}}^k\right\|^2
		+ \frac{4L^2\alpha^2}{n}\mb{t}^k,\quad\forall k\geq 0.
		\end{align*}
	\end{lem}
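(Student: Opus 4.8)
The plan is to start from the exact identity for $\mathbb{E}\left[\left\|\ol{\mb x}^{k+1}-\mb{x}^*\right\|^2\mid\mc{F}^k\right]$ established in Lemma~\ref{x-x*}, which splits the quantity into four pieces: a pure gradient-descent term $\left\|\ol{\mb x}^k-\alpha\nabla f(\ol{\mb x}^k)-\mb{x}^*\right\|^2$, a cross term with the inner product against $\nabla f(\ol{\mb x}^k)-\mb{h}(\mb{x}^k)$, a squared deterministic bias $\alpha^2\left\|\nabla f(\ol{\mb x}^k)-\mb{h}(\mb{x}^k)\right\|^2$, and the conditional variance $\frac{\alpha^2}{n^2}\mathbb{E}\left[\left\|\mb{g}^k-\nabla\mb{f}(\mb{x}^k)\right\|^2\mid\mc{F}^k\right]$. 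I would bound each piece so that every residual is expressed in exactly the three quantities on the right-hand side of the claim, namely $\left\|\ol{\mb x}^k-\mb{x}^*\right\|^2$, the consensus error $\left\|\mb{x}^k-\mb{1}_n\ol{\mb x}^k\right\|^2$, and $\mb{t}^k$.

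For the gradient-descent term I would invoke the strong-convexity/smoothness contraction lemma, valid since $\alpha\le\frac{\mu}{8L^2}\le\frac1L$, giving $\left\|\ol{\mb x}^k-\alpha\nabla f(\ol{\mb x}^k)-\mb{x}^*\right\|^2\le(1-\mu\alpha)^2\left\|\ol{\mb x}^k-\mb{x}^*\right\|^2$; it is essential to keep the square rather than relax it to $1-\mu\alpha$, since the $\mu^2\alpha^2$ slack it carries is what makes room for the other terms. For the cross term I would use Cauchy--Schwarz, the bound $1-\mu\alpha\le1$, and then Young's inequality $2ab\le\eta a^2+\eta^{-1}b^2$ with the parameter tuned to $\eta=\mu$, splitting it into $\mu\alpha\left\|\ol{\mb x}^k-\mb{x}^*\right\|^2$ plus $\frac{\alpha}{\mu}\left\|\nabla f(\ol{\mb x}^k)-\mb{h}(\mb{x}^k)\right\|^2$. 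This residual and the squared-bias term are then converted into consensus error through Lemma~\ref{p4}, i.e. $\left\|\nabla f(\ol{\mb x}^k)-\mb{h}(\mb{x}^k)\right\|^2\le\frac{L^2}{n}\left\|\mb{x}^k-\mb{1}_n\ol{\mb x}^k\right\|^2$, contributing $\frac{L^2\alpha}{\mu n}$ and $\frac{L^2\alpha^2}{n}$ to the consensus coefficient. Finally I would plug Lemma~\ref{var} into the variance term, which after the factor $\frac{\alpha^2}{n^2}$ yields $\frac{4L^2\alpha^2}{n^2}$ on the consensus error, $\frac{4L^2\alpha^2}{n}$ on $\left\|\ol{\mb x}^k-\mb{x}^*\right\|^2$, and precisely $\frac{4L^2\alpha^2}{n}$ on $\mb{t}^k$, the last already matching the target.

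The main obstacle is the closing bookkeeping: showing that the accumulated coefficients collapse to the claimed ones under the single condition $\alpha\le\frac{\mu}{8L^2}$, where the constant budgets are nearly tight. The coefficient of $\left\|\ol{\mb x}^k-\mb{x}^*\right\|^2$ is $(1-\mu\alpha)^2+\mu\alpha+\frac{4L^2\alpha^2}{n}=1-\mu\alpha+\mu^2\alpha^2+\frac{4L^2\alpha^2}{n}$, and I would reduce the desired bound $\le 1-\frac{\mu\alpha}{2}$ to the scalar inequality $\mu^2\alpha+\frac{4L^2\alpha}{n}\le\frac{\mu}{2}$, which the step-size secures since the two summands are at most $\frac{\mu}{8}$ and $\frac{\mu}{2n}$. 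For the consensus coefficient $\frac{L^2\alpha}{\mu n}+\frac{L^2\alpha^2}{n}+\frac{4L^2\alpha^2}{n^2}$ I would use $L^2\alpha\le\frac{\mu}{8}$ to dominate the two $O(\alpha^2)$ terms by $\frac{L^2\alpha}{2\mu n}$, reaching the target $\frac{3L^2\alpha}{2\mu n}$; here $L\ge\mu$ (i.e. $Q\ge1$) is silently used. The delicacy is that the Young's parameter $\eta=\mu$ and the numerical factor $8$ must be matched precisely against these budgets, so that relaxing the descent term to first order or taking $\eta$ too small would break the $\left\|\ol{\mb x}^k-\mb{x}^*\right\|^2$ estimate.
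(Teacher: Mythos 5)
Your proposal is correct and follows essentially the same route as the paper's proof: the exact decomposition of Lemma~\ref{x-x*}, the contraction $(1-\mu\alpha)^2$ on the descent term, Young's inequality with parameter $\mu$ on the cross term, Lemma~\ref{p4} for the bias, and Lemma~\ref{var} for the variance, followed by the same bookkeeping. The one deviation is that you discard the factor $1-\mu\alpha$ before applying Young, forfeiting the exact cancellations $(1-\mu\alpha)^2+\mu\alpha(1-\mu\alpha)=1-\mu\alpha$ and $\frac{\alpha(1-\mu\alpha)}{\mu}+\alpha^2=\frac{\alpha}{\mu}$ that the paper exploits; the resulting extra $\mu^2\alpha^2$ and $\frac{L^2\alpha^2}{n}$ terms still fit the stated budgets when $n\ge 2$ (the only case of interest for a network), but your closing inequalities such as $\frac{\mu}{8}+\frac{\mu}{2n}\le\frac{\mu}{2}$ are tight and would not close at $n=1$, whereas the paper's version holds for all $n$.
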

	\begin{proof}
		Recall Lemma~\ref{x-x*} and use standard contraction in gradient descent. 
		\begin{align*}
		&\mathbb{E}\left[\left\|\ol{\mb x}^{k+1}-\mb{x}^*\right\|^2 | \mc{F}^{k}\right] \nonumber\\
		\leq&~
		\left(1-\mu\alpha\right)^2\left\|\ol{\mb x}^{k}-\mb{x}^*\right\|^2
		+ 2\alpha\left(1-\mu\alpha\right)\left\|\ol{\mb x}^{k}-\mb{x}^*\right\|\left\|\nabla f(\ol{\mb x}^{k})
		-\mb{h}(\mb{x}^k)\right\|+\alpha^2\left\|\nabla f(\ol{\mb x}^{k})
		-\mb{h}(\mb{x}^k)\right\|^2 \nonumber\\
		&+\frac{\alpha^2}{n^2}\mathbb{E}\left[\left\| \mb{g}^k-\nabla\mb{f}(\mb{x}^k)\right\|^2 
		\Big| \mc{F}^{k}\right] \nonumber\\
		\leq&~
		\left(1-\mu\alpha\right)^2\left\|\ol{\mb x}^{k}-\mb{x}^*\right\|^2
		+ \alpha\left(1-\mu\alpha\right)\left(\mu\left\|\ol{\mb x}^{k}-\mb{x}^*\right\|^2+\frac{1}{\mu}\left\|\nabla f(\ol{\mb x}^{k})
		-\mb{h}(\mb{x}^k)\right\|^2\right) \nonumber\\
		&+\alpha^2\left\|\nabla f(\ol{\mb x}^{k})
		-\mb{h}(\mb{x}^k)\right\|^2 +\frac{\alpha^2}{n^2}\mathbb{E}\left[\left\| \mb{g}^k-\nabla\mb{f}(\mb{x}^k)\right\|^2 
		\Big| \mc{F}^{k}\right] \nonumber\\
		=&~
		\left(1-\mu\alpha\right)\left\|\ol{\mb x}^{k}-\mb{x}^*\right\|^2
		+\frac{\alpha}{\mu}\left\|\nabla f(\ol{\mb x}^{k})
		-\mb{h}(\mb{x}^k)\right\|^2 +\frac{\alpha^2}{n^2}\mathbb{E}\left[\left\| \mb{g}^k-\nabla\mb{f}(\mb{x}^k)\right\|^2 
		\Big| \mc{F}^{k}\right]. \nonumber
		\end{align*}
		Applying Lemma~\ref{p4} and~\ref{var} to the inequality above, we have:
		\begin{align*}
		&\mathbb{E}\left[\left\|\ol{\mb x}^{k+1}-\mb{x}^*\right\|^2 | \mc{F}^{k}\right] \nonumber\\
		\leq&~
		\left(1-\mu\alpha\right)\left\|\ol{\mb x}^{k}-\mb{x}^*\right\|^2 + \frac{\alpha L^2}{\mu n}\left\|\mb{x}^k-\mb{1}_n\ol{\mb{x}}^k\right\|^2
		+\frac{\alpha^2}{n^2}\left(4L^2\left\|\mb{x}^k-\mb{1}_n\ol{\mb{x}}^k\right\|^2
		+ 4nL^2\left\|\ol{\mb{x}}^k - \mb{x}^*\right\|^2
		+ 4nL^2\mb{t}^k\right) \nonumber\\
		=&~ \left(1-\mu\alpha+\frac{4L^2\alpha^2}{n}\right)\left\|\ol{\mb x}^{k}-\mb{x}^*\right\|^2
		+ \frac{\alpha L^2}{n}\left(\frac{1}{\mu}+\frac{4\alpha}{n}\right)
		\left\|\mb{x}^k-\mb{1}_n\ol{\mb{x}}^k\right\|^2
		+ \frac{4L^2\alpha^2}{n}\mb{t}^k.
		\end{align*}
		If~$\alpha \leq \frac{n\mu}{8L^2}$,~$1-\mu\alpha+\frac{2L^2\alpha^2}{n}\leq 1-\frac{\mu\alpha}{2}$ and~$\frac{\alpha L^2}{n}\left(\frac{1}{\mu}+\frac{4\alpha}{n}\right)\leq\frac{3L^2\alpha}{2\mu n}$, which finishes the proof.
	\end{proof}
	
	Next, we derive an upper bound for the  gradient tracking error~$\mathbb{E}\left[\left\|\mb{y}^{k+1}-\mb{1}_n\ol{\mb{y}}^{k+1}\right\|^2 | \mc{F}^k\right]$.
	\begin{lem}\label{track_er_saga}
		If~$\alpha\leq\frac{\mu}{2L^2}$, then the following holds:
		\begin{align*}
		&\mathbb{E}\left[\left\|\mb{y}^{k+1} - W_{\infty}\mb{y}^{k+1}\right\|^2\Big|\mc{F}^k\right]
		\nonumber\\
		\leq&~\frac{104L^2}{1-\sigma^2}\left\|\mb{x}^k-\mb{1}_n\ol{\mb{x}}^k\right\|^2
		+ \frac{74nL^2}{1-\sigma^2}\left\|\ol{\mb{x}}^k-\mb{x}^*\right\|^2
		+ \left(\frac{1+\sigma^2}{2}+\frac{40L^2\alpha^2}{1-\sigma^2}\right)\mathbb{E}\left[\left\|\mb{y}^k - \mb{1}_n\ol{\mb{y}}^k\right\|^2\Big|\mc{F}^k\right]
		+ \frac{60nL^2}{1-\sigma^2}\mb{t}^k,
		\end{align*}
	\end{lem}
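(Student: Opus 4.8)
The plan is to convert the gradient-tracker recursion \eqref{bv} into a contraction-plus-perturbation inequality and then spend the bulk of the effort controlling the perturbation $\mathbb{E}\left[\|\mb{g}^{k+1}-\mb{g}^k\|^2\,\big|\,\mc{F}^k\right]$. First I would subtract $W_\infty\mb{y}^{k+1}$ from \eqref{bv} and use $W_\infty W=W_\infty$ to write $\mb{y}^{k+1}-W_\infty\mb{y}^{k+1}=(W\mb{y}^k-W_\infty\mb{y}^k)+(I-W_\infty)(\mb{g}^{k+1}-\mb{g}^k)$. Applying Young's inequality $\|\mb{a}+\mb{b}\|^2\leq(1+\eta)\|\mb{a}\|^2+(1+\eta^{-1})\|\mb{b}\|^2$ with $\eta=\frac{1-\sigma^2}{2\sigma^2}$, together with the contraction Lemma ($\|W\mb{y}^k-W_\infty\mb{y}^k\|^2\leq\sigma^2\|\mb{y}^k-W_\infty\mb{y}^k\|^2$) and $\|I-W_\infty\|_2\leq1$, produces precisely the factor $\frac{1+\sigma^2}{2}$ on $\|\mb{y}^k-W_\infty\mb{y}^k\|^2$ and leaves $\frac{2}{1-\sigma^2}\mathbb{E}\left[\|\mb{g}^{k+1}-\mb{g}^k\|^2\,\big|\,\mc{F}^k\right]$. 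This single $\frac{1}{1-\sigma^2}$ is the only such factor the target is allowed to carry, which will dictate the style of every later estimate.

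Next I would bound $\mathbb{E}\left[\|\mb{g}^{k+1}-\mb{g}^k\|^2\,\big|\,\mc{F}^k\right]$ by conditioning first on $\mc{F}^{k+1}$, under which $\mb{x}^{k+1}$, $\mb{z}^{k+1}$ and $\mb{g}^k$ are all deterministic. The variance identity $\mathbb{E}\|X-\mathbb{E}X\|^2=\mathbb{E}\|X\|^2-\|\mathbb{E}X\|^2$ and Lemma~\ref{p1} (so $\mathbb{E}[\mb{g}^{k+1}|\mc{F}^{k+1}]=\nabla\mb{f}(\mb{x}^{k+1})$) give $\mathbb{E}\left[\|\mb{g}^{k+1}-\mb{g}^k\|^2\,\big|\,\mc{F}^{k+1}\right]=\mathbb{E}\left[\|\mb{g}^{k+1}-\nabla\mb{f}(\mb{x}^{k+1})\|^2\,\big|\,\mc{F}^{k+1}\right]+\|\nabla\mb{f}(\mb{x}^{k+1})-\mb{g}^k\|^2$, and splitting the second piece through $\nabla\mb{f}(\mb{x}^k)$ yields, after $\mathbb{E}[\,\cdot\,|\mc{F}^k]$, three contributions: a one-step-ahead variance $\mathbb{E}\left[\|\mb{g}^{k+1}-\nabla\mb{f}(\mb{x}^{k+1})\|^2|\mc{F}^k\right]$, a smoothness term $L^2\mathbb{E}\left[\|\mb{x}^{k+1}-\mb{x}^k\|^2|\mc{F}^k\right]$, and the current variance $\mathbb{E}\left[\|\nabla\mb{f}(\mb{x}^k)-\mb{g}^k\|^2|\mc{F}^k\right]$.

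The three contributions are then bounded using the already-established results. The current variance is exactly Lemma~\ref{var}. For the smoothness term I would write $\mb{x}^{k+1}-\mb{x}^k=(W-I)(\mb{x}^k-\mb{1}_n\ol{\mb{x}}^k)-\alpha(\mb{y}^k-\mb{1}_n\ol{\mb{y}}^k)-\alpha\mb{1}_n\ol{\mb{y}}^k$, use $\|W-I\|_2\leq2$, and reduce $\mathbb{E}\left[\|\ol{\mb{y}}^k\|^2|\mc{F}^k\right]$ through the bias-variance split $\|\mb{h}(\mb{x}^k)\|^2+\frac{1}{n^2}\mathbb{E}\left[\|\mb{g}^k-\nabla\mb{f}(\mb{x}^k)\|^2|\mc{F}^k\right]$, invoking Lemma~\ref{p3}, Lemma~\ref{p4} (with $\nabla f(\mb{x}^*)=0$), and Lemma~\ref{var}. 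The one-step-ahead variance I would handle by applying Lemma~\ref{var} at index $k+1$ under $\mc{F}^{k+1}$ and then taking $\mathbb{E}[\,\cdot\,|\mc{F}^k]$ of $\|\mb{x}^{k+1}-\mb{1}_n\ol{\mb{x}}^{k+1}\|^2$, $\|\ol{\mb{x}}^{k+1}-\mb{x}^*\|^2$ and $\mb{t}^{k+1}$. The crucial point here is to use the loose one-step bounds $\mathbb{E}\left[\|\mb{x}^{k+1}-\mb{1}_n\ol{\mb{x}}^{k+1}\|^2|\mc{F}^k\right]\leq2\|\mb{x}^k-\mb{1}_n\ol{\mb{x}}^k\|^2+2\alpha^2\mathbb{E}\left[\|\mb{y}^k-\mb{1}_n\ol{\mb{y}}^k\|^2|\mc{F}^k\right]$ (from $\mb{x}^{k+1}-\mb{1}_n\ol{\mb{x}}^{k+1}=(W-W_\infty)\mb{x}^k-\alpha(\mb{y}^k-W_\infty\mb{y}^k)$) and $\mathbb{E}\left[\|\ol{\mb{x}}^{k+1}-\mb{x}^*\|^2|\mc{F}^k\right]\leq2\|\ol{\mb{x}}^k-\mb{x}^*\|^2+2\alpha^2\mathbb{E}\left[\|\ol{\mb{y}}^k\|^2|\mc{F}^k\right]$, rather than the sharper Lemma~\ref{saga_1} and Lemma~\ref{opt_saga}: the sharp versions would inject a second factor $\frac{1}{1-\sigma^2}$ (and a $\frac{1}{\mu}$, forcing the tighter step-size of Lemma~\ref{opt_saga}) into the $\mb{y}$-coefficient, whereas the loose versions keep it at a single $\frac{L^2\alpha^2}{1-\sigma^2}$, matching $\frac{40L^2\alpha^2}{1-\sigma^2}$. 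The quantity $\mathbb{E}\left[\mb{t}^{k+1}|\mc{F}^k\right]$ is supplied directly by the $\mb{t}^k$-recursion lemma.

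Finally I would gather the coefficients of $\|\mb{x}^k-\mb{1}_n\ol{\mb{x}}^k\|^2$, $\|\ol{\mb{x}}^k-\mb{x}^*\|^2$, $\mathbb{E}\left[\|\mb{y}^k-\mb{1}_n\ol{\mb{y}}^k\|^2|\mc{F}^k\right]$ and $\mb{t}^k$ across the three contributions and use $\alpha\leq\frac{\mu}{2L^2}$ (hence $\alpha^2L^2\leq\frac14$ and $\alpha L^2\leq\frac{\mu}{2}$, with $\mu\leq L$) to absorb every residual $\alpha$-dependence into the constants, landing at $\frac{104L^2}{1-\sigma^2}$, $\frac{74nL^2}{1-\sigma^2}$, $\frac{40L^2\alpha^2}{1-\sigma^2}$ (added to $\frac{1+\sigma^2}{2}$) and $\frac{60nL^2}{1-\sigma^2}$, respectively. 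I expect the main obstacle to be the one-step-ahead variance: it is the only place where future-iterate quantities ($\mb{x}^{k+1}$, $\ol{\mb{x}}^{k+1}$, $\mb{t}^{k+1}$) enter, and keeping them from spawning a second $\frac{1}{1-\sigma^2}$ is exactly what forces the deliberate use of the loose consensus/optimality estimates; the rest is the (non-tight) bookkeeping of numerical constants, for which there is ample slack.
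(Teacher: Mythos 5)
Your route is genuinely different from the paper's. You pay the network penalty up front: Young's inequality with $\eta=\frac{1-\sigma^2}{2\sigma^2}$ on the split $\mb{y}^{k+1}-W_\infty\mb{y}^{k+1}=(W\mb{y}^k-W_\infty\mb{y}^k)+(I-W_\infty)(\mb{g}^{k+1}-\mb{g}^k)$ gives $\frac{1+\sigma^2}{2}\|\mb{y}^k-W_\infty\mb{y}^k\|^2+\frac{2}{1-\sigma^2}\mathbb{E}\left[\|\mb{g}^{k+1}-\mb{g}^k\|^2\,|\,\mc{F}^k\right]$, after which every estimate of the perturbation can be crude. The paper instead expands the square exactly, keeps $\mathbb{E}\left[\|\mb{g}^{k+1}-\mb{g}^k\|^2\,|\,\mc{F}^k\right]$ with coefficient $1$, and spends its effort on the cross term $2\mathbb{E}\left[\langle W\mb{y}^k-W_\infty\mb{y}^k,\mb{g}^{k+1}-\mb{g}^k\rangle\,|\,\mc{F}^k\right]$: by the tower property and unbiasedness this reduces to inner products with $\nabla\mb{f}(\mb{x}^{k+1})-\nabla\mb{f}(\mb{x}^k)$ (handled by Young with parameter $\frac{1-\sigma^2}{2}$, which is where the $\frac{1+\sigma^2}{2}$ actually comes from) and with $\nabla\mb{f}(\mb{x}^k)-\mb{g}^k$ (one piece is shown to equal $-\frac{2}{n}\mathbb{E}\left[\|\mb{g}^k-\nabla\mb{f}(\mb{x}^k)\|^2\right]$ plus a nonpositive term). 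Inside $\|\mb{g}^{k+1}-\mb{g}^k\|^2$ the paper also uses an exact decomposition $V_1+V_2+2V_3$ with a perturbed-gradient argument ($\widehat{\nabla}_i^k$) for the cross term $V_3$, rather than a $2$--$2$ split, and it can afford the sharp one-step recursions (Lemmas~\ref{saga_1},~\ref{opt_saga}) inside $V_1$ precisely because $V_1$ is not multiplied by $\frac{1}{1-\sigma^2}$. Your observation that, under your scheme, the forward iterates must be bounded by the loose one-step estimates to avoid a $\frac{1}{(1-\sigma^2)^2}$ on the $\mb{y}$-coefficient is correct and is the one genuinely delicate point in your plan; it does keep that coefficient at a single $\frac{L^2\alpha^2}{1-\sigma^2}$.

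The gap is quantitative, and it is not covered by "ample slack." Because you multiply the entire perturbation by $\frac{2}{1-\sigma^2}$ and additionally incur factors of $2$ from splitting $\|\nabla\mb{f}(\mb{x}^{k+1})-\mb{g}^k\|^2$ through $\nabla\mb{f}(\mb{x}^k)$, the consensus-error coefficient overshoots the stated one. A direct tally of your three contributions (with $\alpha^2L^2\leq\frac14$) gives, for the coefficient of $\|\mb{x}^k-\mb{1}_n\ol{\mb{x}}^k\|^2$ before the outer factor, roughly $28L^2$ from the one-step-ahead variance, at least $24L^2$ from $2L^2\|\mb{x}^{k+1}-\mb{x}^k\|^2$ via $\|W-I\|\leq2$ and the three-way split, and $8L^2$ from the current variance — on the order of $65$--$70\,L^2$ in total, hence about $\frac{130L^2}{1-\sigma^2}$ after multiplying by $\frac{2}{1-\sigma^2}$, which exceeds the stated $\frac{104L^2}{1-\sigma^2}$ (the paper gets $104$ only because $60L^2$ of its total enters without any $\frac{1}{1-\sigma^2}$ factor). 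Your argument therefore proves an inequality of the same form with larger absolute constants — enough for the downstream linear-system/spectral-radius analysis after re-deriving $J_\alpha$ and the step-size threshold, but not the lemma as literally stated. To recover the stated constants you would need either the paper's exact cross-term treatment or a re-statement of the lemma with your constants propagated through Theorem~1.
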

	\vspace{0.3cm}
	\begin{proof}
		Using the gradient tracking update, we have:
		\begin{align*}
		&\left\|\mb{y}^{k+1} - W_{\infty}\mb{y}^{k+1}\right\|^2
		\nonumber\\
		=&
		\left\|W\mb{y}^k +
		\mb{g}^{k+1}-\mb{g}^k-
		W_{\infty}\left(W\mb{y}^k +
		\mb{g}^{k+1}-\mb{g}^k\right)
		\right\|^2  =
		\left\|W\mb{y}^k-
		W_{\infty}\mb{y}^k + \left(I_n-
		W_{\infty}\right)\left(
		\mb{g}^{k+1}-\mb{g}^k\right)
		\right\|^2 \nonumber\\
		\leq&~
		\sigma^2 \left\|\mb{y}^k-
		W_{\infty}\mb{y}^k\right\|^2 + \left\|\mb{g}^{k+1}-\mb{g}^k\right\|^2
		+ 2\Big\langle W\mb{y}^k-
		W_{\infty}\mb{y}^k, \left(I_n-
		W_{\infty}\right)\left(
		\mb{g}^{k+1}-\mb{g}^k\right)\Big\rangle.
		\end{align*}
		We then take the conditional expectation given~$\mc{F}^k$ to obtain:
		\begin{align}\label{y1}
		\mathbb{E}\left[\left\|\mb{y}^{k+1} - \mb{1}_n\ol{\mb{y}}^{k+1}\right\|^2\Big|\mc{F}^k\right]
		\leq&~
		\sigma^2\mathbb{E}\left[\left\|\mb{y}^{k} - W_{\infty}\mb{y}^{k}\right\|^2\Big |\mc{F}^k\right]
		+ \mathbb{E}\left[\left\|\mb{g}^{k+1}-\mb{g}^k\right\|^2\Big|\mc{F}^k\right]\nonumber\\
		&+ 2\mathbb{E}\left[\Big\langle W\mb{y}^k-
		W_{\infty}\mb{y}^k, \left(I_n-
		W_{\infty}\right)\left(
		\mb{g}^{k+1}-\mb{g}^k\right)\Big\rangle\Big |\mc{F}^k\right]
		\end{align}
		Next we bound~$\mathbb{E}\left[\left\|\mb{g}^{k+1}-\mb{g}^k\right\|^2\Big|\mc{F}^k\right]$.
		\begin{align}
		&\mathbb{E}\left[\left\|\mb{g}^{k+1}-\mb{g}^k\right\|^2\Big|\mc{F}^k\right] = \mathbb{E}\left[\left\|\mb{g}^{k+1}-\mb{g}^k 
		-\left(\nabla\mb{f}(\mb{x}^{k+1})-\nabla\mb{f}(\mb{x}^{k})\right) +\left(\nabla\mb{f}(\mb{x}^{k+1})-\nabla\mb{f}(\mb{x}^{k})\right)
		\right\|^2\Big|\mc{F}^k\right] \nonumber\\
		=&~
		\mathbb{E}\left[\left\|\mb{g}^{k+1}-\mb{g}^k 
		-\left(\nabla\mb{f}(\mb{x}^{k+1})-\nabla\mb{f}(\mb{x}^{k})\right)\right\|^2\Big|\mc{F}^k\right]
		+
		\mathbb{E}\left[\left\|\nabla\mb{f}(\mb{x}^{k+1})-\nabla\mb{f}(\mb{x}^{k})\right\|^2\Big|\mc{F}^k\right]\nonumber\\
		&+
		2\mathbb{E}\left[\Big\langle \nabla\mb{f}(\mb{x}^{k+1})-\nabla\mb{f}(\mb{x}^{k})
		, \mb{g}^{k+1}-\mb{g}^k 
		-\left(\nabla\mb{f}(\mb{x}^{k+1})-\nabla\mb{f}(\mb{x}^{k})\right)\Big\rangle\Big|\mc{F}^k\right] \nonumber\\
		\triangleq&~ V_1 + V_2 + 2V_3.
		\end{align}
		Next, we bound~$V_1,V_2,V_3$ respectively, starting with~$V_2$.
		\begin{align}\label{gradf}
		&\left\|\nabla\mb{f}(\mb{x}^{k+1})-\nabla\mb{f}(\mb{x}^{k})\right\|^2\nonumber\\ 
		\leq&~ L^2\left\|\mb{x}^{k+1}-\mb{x}^{k}\right\|^2
		= L^2\left\|W\mb{x}^{k}-\alpha\mb{y}^k-\mb{x}^{k}\right\|^2
		\nonumber\\
		=&~ L^2\left\|\left(W-I_n\right)\left(\mb{x}^k-W_\infty\mb{x}^k\right)-\alpha\mb{y}^k\right\|^2
		\leq
		8L^2\left\|\mb{x}^k-W_\infty\mb{x}^k\right\|^2 + 2\alpha^2L^2\left\|\mb{y}^k\right\|^2.
		\end{align}
		Next we derive a bound for~$\left\|\mb{y}^k\right\|^2$. 
		\begin{align}
		\left\|\mb{y}^k\right\|
		=&~\left\|\mb{y}^k - W_\infty\mb{y}^k + W_\infty\mb{g}^k
		-W_\infty\nabla\mb{f}(\mb{x}^k) + W_\infty\nabla\mb{f}(\mb{x}^k)
		-W_\infty\nabla\mb{f}(\mb{1}_n\mb{x}^*)\right\| \nonumber\\
		\leq&~
		\left\|\mb{y}^k - W_\infty\mb{y}^k\right\|
		+ \sqrt{n} \left\|\ol{\mb{g}}^k-\mb{h}(\mb{x}^k)\right\| 
		+ L\left\|\mb{x}^k-\mb{1}_n\mb{x}^*\right\|
		\nonumber\\
		\leq&~
		\left\|\mb{y}^k - W_\infty\mb{y}^k\right\|
		+ L\left\|\mb{x}^k-\mb{1}_n\ol{\mb{x}}^k\right\|
		+ \sqrt{n}L\left\|\ol{\mb{x}}^k-\mb{x}^*\right\|
		+ \sqrt{n} \left\|\ol{\mb{g}}^k-\mb{h}(\mb{x}^k)\right\|.
		\nonumber
		\end{align}
		Squaring the last inequality above to obtain:
		\begin{align}\label{y}
		\left\|\mb{y}^k\right\|^2
		\leq 4\left\|\mb{y}^k - W_\infty\mb{y}^k\right\|^2
		+ 4L^2\left\|\mb{x}^k-\mb{1}_n\ol{\mb{x}}^k\right\|^2
		+ 4nL^2\left\|\ol{\mb{x}}^k-\mb{x}^*\right\|^2
		+ 4n\left\|\mb{h}(\mb{x}^k)-\ol{\mb{g}}^k\right\|^2
		\end{align}
		Using~\eqref{y} and Lemma~\ref{var} in~\eqref{gradf} obtains an upper bound on~$V_2$ as follows: 
		\begin{align}
		V_2\leq&
		\left(8L^2 + 8L^4\alpha^2\right)\left\|\mb{x}^k-\mb{1}_n\ol{\mb{x}}^k\right\|^2
		+ 8nL^4\alpha^2\left\|\ol{\mb{x}}^k-\mb{x}^*\right\|^2
		+ 8L^2\alpha^2\mathbb{E}\left[\left\|\mb{y}^k - \mb{1}_n\ol{\mb{y}}^k\right\|^2\Big |\mc{F}^k\right] \nonumber\\
		&+ 8nL^2\alpha^2\frac{1}{n^2}\left(4L^2\left\|\mb{x}^k-\mb{1}_n\ol{\mb{x}}^k\right\|^2
		+ 4nL^2\left\|\ol{\mb{x}}^k - \mb{x}^*\right\|^2
		+ 4nL^2\mb{t}^k\right)
		\end{align}
		If~$\alpha\leq\frac{1}{2L}$, then~$\alpha^2\leq\frac{1}{4L^2}$, we have the following:
		\begin{align}\label{V2}
		V_2\leq&~
		10L^2\left\|\mb{x}^k-\mb{1}_n\ol{\mb{x}}^k\right\|^2
		+ 2nL^2\left\|\ol{\mb{x}}^k-\mb{x}^*\right\|^2
		+ 8L^2\alpha^2\mathbb{E}\left[\left\|\mb{y}^k - \mb{1}_n\ol{\mb{y}}^k\right\|^2\Big |\mc{F}^k\right]\nonumber\\
		&+
		\frac{8L^2}{n}\left(\left\|\mb{x}^k-\mb{1}_n\ol{\mb{x}}^k\right\|^2
		+ n\left\|\ol{\mb{x}}^k - \mb{x}^*\right\|^2
		+ n\mb{t}^k\right)\nonumber\\
		\leq&~
		18L^2\left\|\mb{x}^k-\mb{1}_n\ol{\mb{x}}^k\right\|^2
		+ 10nL^2\left\|\ol{\mb{x}}^k-\mb{x}^*\right\|^2
		+ 8L^2\alpha^2\mathbb{E}\left[\left\|\mb{y}^k - \mb{1}_n\ol{\mb{y}}^k\right\|^2\Big |\mc{F}^k\right]
		+ 8L^2\mb{t}^k,
		\end{align}
		Next, we derive an upper bound for~$V_3$.
		\begin{align}\label{V3'}
		V_3 
		=&~ 
		\mathbb{E}\left[\mathbb{E}\left[\Big\langle \nabla\mb{f}(\mb{x}^{k+1})-\nabla\mb{f}(\mb{x}^{k})
		, \mb{g}^{k+1}-\mb{g}^k 
		-\left(\nabla\mb{f}(\mb{x}^{k+1})-\nabla\mb{f}(\mb{x}^{k})\right)\Big\rangle\Big|\mc{F}^{k+1}\right]\Big|\mc{F}^{k}\right]
		\nonumber\\
		=&~ 
		\mathbb{E}\left[\Big\langle \nabla\mb{f}(\mb{x}^{k+1})-\nabla\mb{f}(\mb{x}^{k})
		, \nabla\mb{f}(\mb{x}^{k})-\mb{g}^k 
		\Big\rangle\Big|\mc{F}_{k}\right]
		= \mathbb{E}\left[\Big\langle \nabla\mb{f}(\mb{x}^{k+1})
		, \nabla\mb{f}(\mb{x}^{k})-\mb{g}^k 
		\Big\rangle\Big|\mc{F}_{k}\right] \nonumber\\
		=&
		\sum_{i=1}^{n} \mathbb{E}\left[\Big\langle \nabla{f}_i(\mb{x}_i^{k+1})
		, \nabla{f}_i(\mb{x}^{k})-\mb{g}_i^k 
		\Big\rangle\Big|\mc{F}^{k}\right].
		\end{align}
		Note that 
		$$\nabla f_i(\mb{x}_i^{k+1})
		= \nabla f_i\left(\sum_{j=1}^{n}w_{ij}\mb{x}_{j}^k-\alpha\left(\sum_{j=1}^{n}w_{ij}\mb{y}_j^{k-1} + \mb{g}_i^k - \mb{g}_i^{k-1}\right)\right).
		$$
		We define~$\widehat{\nabla}_i^k$ as the following~\cite{DSGT_Pu}:
		$$
		\widehat{\nabla}_i^k
		\triangleq 
		\nabla f_i\left(\sum_{j=1}^{n}w_{ij}\mb{x}_{j}^k-\alpha\left(\sum_{j=1}^{n}w_{ij}\mb{y}_j^{k-1} + \nabla f_i(\mb{x}_i^{k}) - \mb{g}_i^{k-1}\right)\right).
		$$
		Therefore we have that 
		\begin{align}\label{PS}
		\left\|\nabla f_i(\mb{x}_i^{k+1})-\widehat{\nabla}_i^k\right\|
		\leq L\alpha\left\|\mb{g}_i^k-\nabla f_i(\mb{x}_i^{k})\right\|.
		\end{align}
		Using~\eqref{PS} in~\eqref{V3'}, we have the following: if~$\alpha\leq\frac{1}{2L}$:
		\begin{align}\label{V_3}
		V_3 
		=&~\sum_{i=1}^{n} \mathbb{E}\left[\Big\langle \nabla{f}_i(\mb{x}_i^{k+1})-\widehat{\nabla}_i^k
		, \nabla{f}_i(\mb{x}^{k})-\mb{g}_i^k 
		\Big\rangle\Big|\mc{F}_{k}\right] \nonumber\\
		\leq&~\alpha L\sum_{i=1}^{n}\mathbb{E}\left[\left\|\mb{g}_i^k-\nabla f_i(\mb{x}_i^{k})\right\|^2\Big|\mc{F}_k\right]\nonumber\\
		\leq&~\alpha L
		\left(4L^2\left\|\mb{x}^k-\mb{1}_n\ol{\mb{x}}^k\right\|^2
		+ 4nL^2\left\|\ol{\mb{x}}^k - \mb{x}^*\right\|^2
		+ 4nL^2\mb{t}^k\right)
		\nonumber\\
		\leq&~
		2L^2\left\|\mb{x}^k-\mb{1}_n\ol{\mb{x}}^k\right\|^2
		+ 2nL^2\left\|\ol{\mb{x}}^k - \mb{x}^*\right\|^2
		+ 2nL^2\mb{t}^k,
		\end{align}
		where in the second inequality we used Lemma~\ref{var}. Finally we derive an upper bound for~$V_1$.
		\begin{align}\label{V1'}
		V_1 =&~\mathbb{E}\left[\left\|\mb{g}^{k+1}-\mb{g}^k 
		-\left(\nabla\mb{f}(\mb{x}^{k+1})-\nabla\mb{f}(\mb{x}^{k})\right)\right\|^2\Big|\mc{F}^k\right]\nonumber\\
		\leq&~2\mathbb{E}\left[\left\|\mb{g}^{k+1}-\nabla\mb{f}(\mb{x}^{k+1})\right\|^2\Big|\mc{F}^k\right]
		+2\mathbb{E}\left[\left\|\mb{g}^k 
		-\nabla\mb{f}(\mb{x}^{k})\right\|^2\Big|\mc{F}^k\right]
		\nonumber\\
		=&~2\mathbb{E}\left[\mathbb{E}\left[\left\|\mb{g}^{k+1}-\nabla\mb{f}(\mb{x}^{k+1})\right\|^2\Big|\mc{F}^{k+1}\right]\Big|\mc{F}^{k}\right]
		+2\mathbb{E}\left[\left\|\mb{g}^k 
		-\nabla\mb{f}(\mb{x}^{k})\right\|^2\Big|\mc{F}^k\right]
		\end{align}
		We first bound~$\mathbb{E}\left[\mathbb{E}\left[\left\|\mb{g}^{k+1}-\nabla\mb{f}(\mb{x}^{k+1})\right\|^2\Big|\mc{F}^{k+1}\right]\Big|\mc{F}^{k}\right]$. Using~\eqref{var}, we have: if~$\alpha\leq\frac{\mu}{2L^2}$,
		\begin{align}
		&\mathbb{E}\left[\mathbb{E}\left[\left\|\mb{g}^{k+1}-\nabla\mb{f}(\mb{x}^{k+1})\right\|^2\Big|\mc{F}^{k+1}\right]\Big|\mc{F}^{k}\right] \nonumber\\
		\leq&~ 4L^2\mathbb{E}\left[\left\|\mb{x}^{k+1}-\mb{1}_n\ol{\mb{x}}^{k+1}\right\|^2\Big|\mc{F}^k\right]
		+ 4nL^2\mathbb{E}\left[\left\|\ol{\mb{x}}^{k+1} - \mb{x}^*\right\|^2\Big|\mc{F}^k\right]
		+ 4nL^2\mathbb{E}\left[\mb{t}^{k+1}\Big|\mc{F}^k\right].
		\end{align}
		We then apply Lemma~\ref{1},~\ref{2} and~\ref{3} to the above inequality to obtain: 
		\begin{align}\label{V1''}
		&\mathbb{E}\left[\mathbb{E}\left[\left\|\mb{g}^{k+1}-\nabla\mb{f}(\mb{x}^{k+1})\right\|^2\Big|\mc{F}^{k+1}\right]\Big|\mc{F}^{k}\right] \nonumber\\
		\leq&~
		4L^2\left(\frac{1+\sigma^2}{2}\left\|\mb{x}^k-\mb{1}_n\ol{\mb{x}}^k\right\|^2 + \frac{2\alpha^2}{1-\sigma^2}\mathbb{E}\left[\left\|\mb{y}^k-\mb{1}_n\ol{\mb{y}}^k\right\|^2\Big |\mc{F}^k\right]\right) \nonumber\\
		&+4nL^2\left(\left(1-\frac{\mu\alpha}{2}\right)\left\|\ol{\mb x}^{k}-\mb{x}^*\right\|^2
		+ \frac{3L^2\alpha}{2\mu n}
		\left\|\mb{x}^k-\mb{1}_n\ol{\mb{x}}^k\right\|^2
		+ \frac{4L^2\alpha^2}{n}\mb{t}^k\right)\nonumber\\
		&+ 4nL^2\left(
		\left(1-\frac{1}{M}\right)\mb{t}^k + \frac{2}{mn}\left\|\mb{x}^k-\mb{1}_n\ol{\mb{x}}^k\right\|^2
		+ \frac{2}{m}\left\|\ol{\mb{x}}^k-\mb{x}^*\right\|^2\right)
		\nonumber\\
		\leq&~15L^2\left\|\mb{x}^k-\mb{1}_n\ol{\mb{x}}^k\right\|^2 + 12nL^2\left\|\ol{\mb x}^{k}-\mb{x}^*\right\|^2
		+ \frac{8L^2\alpha^2}{1-\sigma^2}\mathbb{E}\left[\left\|\mb{y}^k-\mb{1}_n\ol{\mb{y}}^k\right\|^2\Big|\mc{F}^k \right]+ 8nL^2\mb{t}^k.
		\end{align}
		We use~\eqref{V1'},~\eqref{V1''} and Lemma~\ref{var} to obtain an upper bound for~$V_1$ as follows.
		\begin{align}\label{V1}
		V_1 \leq&~
		2\left(15L^2\left\|\mb{x}^k-\mb{1}_n\ol{\mb{x}}^k\right\|^2 + 12nL^2\left\|\ol{\mb x}^{k}-\mb{x}^*\right\|^2
		+ \frac{8L^2\alpha^2}{1-\sigma^2}\left\|\mb{y}^k-\mb{1}_n\ol{\mb{y}}^k\right\|^2 + 8nL^2\mb{t}^k\right)
		\nonumber\\
		&+ 2\left(4L^2\left\|\mb{x}^k-\mb{1}_n\ol{\mb{x}}^k\right\|^2
		+ 4nL^2\left\|\ol{\mb{x}}^k - \mb{x}^*\right\|^2
		+ 4nL^2\mb{t}^k\right)\nonumber\\
		=&~38L^2\left\|\mb{x}^k-\mb{1}_n\ol{\mb{x}}^k\right\|^2 + 32nL^2\left\|\ol{\mb x}^{k}-\mb{x}^*\right\|^2
		+ \frac{16L^2\alpha^2}{1-\sigma^2}\left\|\mb{y}^k-\mb{1}_n\ol{\mb{y}}^k\right\|^2 + 24nL^2\mb{t}^k
		\end{align}
		We apply the upper bounds on~$V_1,V_2,V_3$ in~\eqref{V1},~\eqref{V2} and~\eqref{V_3} to~\eqref{y1} to derive an upper bound for $\mathbb{E}\left[\left\|\mb{g}^{k+1}-\mb{g}^{k}\right\|^2\Big|\mc{F}^k\right]$.
		\begin{align}\label{g_difference}
		&\mathbb{E}\left[\left\|\mb{g}^{k+1}-\mb{g}^{k}\right\|^2\Big|\mc{F}^k\right] 
		\leq V_1 + V_2 + 2V_3 \nonumber\\
		\leq&~\left(38L^2\left\|\mb{x}^k-\mb{1}_n\ol{\mb{x}}^k\right\|^2 + 32nL^2\left\|\ol{\mb x}^{k}-\mb{x}^*\right\|^2
		+ \frac{16L^2\alpha^2}{1-\sigma^2}\mathbb{E}\left[\left\|\mb{y}^k - \mb{1}_n\ol{\mb{y}}^k\right\|^2\Big|\mc{F}^k\right] + 24nL^2\mb{t}^k\right) \nonumber\\
		&+ \left(18L^2\left\|\mb{x}^k-\mb{1}_n\ol{\mb{x}}^k\right\|^2
		+ 10nL^2\left\|\ol{\mb{x}}^k-\mb{x}^*\right\|^2
		+ 8L^2\alpha^2\mathbb{E}\left[\left\|\mb{y}^k - \mb{1}_n\ol{\mb{y}}^k\right\|^2\Big|\mc{F}^k\right]
		+ 8L^2\mb{t}^k\right)\nonumber\\
		&+ 2\left(2L^2\left\|\mb{x}^k-\mb{1}_n\ol{\mb{x}}^k\right\|^2
		+ 2nL^2\left\|\ol{\mb{x}}^k - \mb{x}^*\right\|^2
		+ 2nL^2\mb{t}^k\right)\nonumber\\
		=&~60L^2\left\|\mb{x}^k-\mb{1}_n\ol{\mb{x}}^k\right\|^2 + 46nL^2\left\|\ol{\mb x}^{k}-\mb{x}^*\right\|^2
		+ \frac{24L^2\alpha^2}{1-\sigma^2}\mathbb{E}\left[\left\|\mb{y}^k - \mb{1}_n\ol{\mb{y}}^k\right\|^2\Big|\mc{F}^k\right] + 36nL^2\mb{t}^k
		\end{align}
		Next, we derive an upper bound for~$2\mathbb{E}\left[\Big\langle W\mb{y}^k-
		W_{\infty}\mb{y}^k, \left(I_n-
		W_{\infty}\right)\left(
		\mb{g}^{k+1}-\mb{g}^k\right)\Big\rangle\Big |\mc{F}^k\right]$. We first note that:
		\begin{align*}
		\mathbb{E}\left[\Big\langle W\mb{y}^k-
		W_{\infty}\mb{y}^k, \left(I_n-
		W_{\infty}\right)\left(
		\mb{g}^{k+1}-\mb{g}^k\right)\Big\rangle\Big |\mc{F}^k\right]
		=
		\mathbb{E}\left[\Big\langle W\mb{y}^k-
		W_{\infty}\mb{y}^k, \mb{g}^{k+1}-\mb{g}^k\Big\rangle\Big |\mc{F}^k\right],
		\end{align*}
		since~$\Big\langle W\mb{y}^k-
		W_{\infty}\mb{y}^k, 
		W_{\infty}\left(
		\mb{g}^{k+1}-\mb{g}^k\right)\Big\rangle = 0$. Using the tower property of the conditional expectation,
		\begin{align}
		&2\mathbb{E}\left[\Big\langle W\mb{y}^k-
		W_{\infty}\mb{y}^k, \mb{g}^{k+1}-\mb{g}^k\Big\rangle\Big |\mc{F}^k\right]
		=
		2\mathbb{E}\left[\mathbb{E}\left[\Big\langle W\mb{y}^k-
		W_{\infty}\mb{y}^k, \mb{g}^{k+1}-\mb{g}^k\Big\rangle\Big |\mc{F}^{k+1}\right]\Big |\mc{F}^k\right]\nonumber\\
		=&~2\mathbb{E}\left[\Big\langle W\mb{y}^k-
		W_{\infty}\mb{y}^k, \nabla\mb{f}(\mb{x}^{k+1})-\mb{g}^k\Big\rangle\Big |\mc{F}^k\right]\nonumber\\
		=&~2\mathbb{E}\left[\Big\langle W\mb{y}^k-
		W_{\infty}\mb{y}^k, \nabla\mb{f}(\mb{x}^{k+1})-\nabla\mb{f}(\mb{x}^{k})\Big\rangle\Big |\mc{F}^k\right]
		+ 2\mathbb{E}\left[\Big\langle W\mb{y}^k-
		W_{\infty}\mb{y}^k, \nabla\mb{f}(\mb{x}^{k})-\mb{g}^k\Big\rangle\Big |\mc{F}^k\right]\nonumber\\
		\triangleq&~R_1 + R_2
		\end{align}
		Next, we bound~$R_1$ and~$R_2$ separately, starting with~$R_1$.
		\begin{align*}
		2\Big\langle W\mb{y}^k-
		W_{\infty}\mb{y}^k, \nabla\mb{f}(\mb{x}^{k+1})-\nabla\mb{f}(\mb{x}^{k})\Big\rangle 
		\leq\frac{1-\sigma^2}{2}\left\|\mb{y}^k-
		W_{\infty}\mb{y}^k\right\|^2 + \frac{2\sigma^2}{1-\sigma^2} \left\|\nabla\mb{f}(\mb{x}^{k+1})-\nabla\mb{f}(\mb{x}^{k})\right\|^2.
		\end{align*}
		Taking the conditional expectation given~$\mc{F}^k$ and Applying~\eqref{V2} to the above inequality, we have that:
		\begin{align}\label{I_1}
		R_1 \leq&~\frac{1-\sigma^2}{2}\mathbb{E}\left[\left\|\mb{y}^k-
		\mb{1}_n\ol{\mb{y}}^k\right\|^2\Big |\mc{F}^k\right]\nonumber\\
		&+ \frac{2\sigma^2}{1-\sigma^2}\left(18L^2\left\|\mb{x}^k-\mb{1}_n\ol{\mb{x}}^k\right\|^2
		+ 10nL^2\left\|\ol{\mb{x}}^k-\mb{x}^*\right\|^2
		+ 8L^2\alpha^2\mathbb{E}\left[\left\|\mb{y}^k - \mb{1}_n\ol{\mb{y}}^k\right\|^2\Big|\mc{F}^k\right]
		+ 8L^2\mb{t}^k\right) \nonumber\\
		\leq&~\frac{36L^2}{1-\sigma^2}\left\|\mb{x}^k-\mb{1}_n\ol{\mb{x}}^k\right\|^2
		+ \frac{20nL^2}{1-\sigma^2}\left\|\ol{\mb{x}}^k-\mb{x}^*\right\|^2
		+ \frac{16L^2}{1-\sigma^2}\mb{t}^k \nonumber\\
		&\qquad\qquad+ \left(\frac{1-\sigma^2}{2}+\frac{16L^2\alpha^2}{1-\sigma^2}\right)\mathbb{E}\left[\left\|\mb{y}^k - \mb{1}_n\ol{\mb{y}}^k\right\|^2\Big|\mc{F}^k\right]
		\end{align}
		Next, we bound~$R_2$. We first note that:
		\begin{align*}
		R_2 = 2\mathbb{E}\left[\Big\langle W\mb{y}^k, \nabla\mb{f}(\mb{x}^{k})-\mb{g}^k\Big\rangle\Big |\mc{F}^k\right]
		-
		2\mathbb{E}\left[\Big\langle
		W_{\infty}\mb{y}^k, \nabla\mb{f}(\mb{x}^{k})-\mb{g}^k\Big\rangle\Big |\mc{F}^k\right].
		\end{align*}
		For the first term, using the~$\mb{y}^k$-update of the algorithm, we have that
		\begin{align*}
		2\mathbb{E}\left[\Big\langle W\mb{y}^k, \nabla\mb{f}(\mb{x}^{k})-\mb{g}^k\Big\rangle\Big |\mc{F}^k\right]
		=&~2\mathbb{E}\left[\Big\langle W^2\mb{y}^{k-1}+\mb{g}^{k}-\mb{g}^{k-1}, \nabla\mb{f}(\mb{x}^{k})-\mb{g}^k\Big\rangle\Big |\mc{F}^k\right] \nonumber\\
		=&~2\mathbb{E}\left[\Big\langle \mb{g}^{k}, \nabla\mb{f}(\mb{x}^{k})-\mb{g}^k\Big\rangle\Big |\mc{F}^k\right]
		=2\mathbb{E}\left[\Big\langle \mb{g}^{k}-\nabla\mb{f}(\mb{x}^{k}), \nabla\mb{f}(\mb{x}^{k})-\mb{g}^k\Big\rangle\Big |\mc{F}^k\right]\leq 0.
		\end{align*}
		For the second term, since~$\{\mb{g}_i^k\}$ are independent given~$\mc{F}^k$, we have 
		\begin{align}\label{I_2}
		&-2\mathbb{E}\left[\Big\langle
		W_{\infty}\mb{y}^k, \nabla\mb{f}(\mb{x}^{k})-\mb{g}^k\Big\rangle\Big |\mc{F}^k\right] \nonumber\\
		=&-2\sum_{i=1}^{n}\mathbb{E}\left[\Big\langle
		\ol{\mb{g}}^k, \nabla f_i(\mb{x}_i^{k})-\mb{g}_i^k\Big\rangle\Big |\mc{F}^k\right]
		= -\frac{2}{n}\sum_{i=1}^{n}\mathbb{E}\left[\Big\langle
		\mb{g}_i^k, \nabla f_i(\mb{x}_i^{k})-\mb{g}_i^k\Big\rangle\Big |\mc{F}^k\right]
		\nonumber\\
		=&~\frac{2}{n}\mathbb{E}\left[
		\left\|\mb{g}^k-\nabla\mb{f}(\mb{x}^{k})\right\|^2
		\Big |\mc{F}^k\right]
		\leq \frac{8L^2}{n}\left\|\mb{x}^k-\mb{1}_n\ol{\mb{x}}^k\right\|^2
		+ 8L^2\left\|\ol{\mb{x}}^k - \mb{x}^*\right\|^2
		+ 8L^2\mb{t}^k,
		\end{align}
		where in the last inequality we used~\eqref{var}. Combining the upper bounds on~$R_1$ and~$R_2$ in~\eqref{I_1} and~\eqref{I_2}, we have:
		\begin{align}\label{y_cross}
		&\mathbb{E}\left[\Big\langle W\mb{y}^k-
		W_{\infty}\mb{y}^k, \mb{g}^{k+1}-\mb{g}^k\Big\rangle\Big |\mc{F}^k\right]
		\nonumber\\
		\leq&~
		\frac{44L^2}{1-\sigma^2}\left\|\mb{x}^k-\mb{1}_n\ol{\mb{x}}^k\right\|^2
		+ \frac{28nL^2}{1-\sigma^2}\left\|\ol{\mb{x}}^k-\mb{x}^*\right\|^2
		+ \frac{24L^2}{1-\sigma^2}\mb{t}^k\nonumber\\
		&\qquad\qquad+ \left(\frac{1-\sigma^2}{2}+\frac{16L^2\alpha^2}{1-\sigma^2}\right)\mathbb{E}\left[\left\|\mb{y}^k - \mb{1}_n\ol{\mb{y}}^k\right\|^2\Big|\mc{F}^k\right].
		\end{align}
		Finally we combine~\eqref{y1},~\eqref{g_difference} and~\eqref{y_cross} to obtain an upper bound for~$\mathbb{E}\left[\left\|\mb{y}^{k+1} - W_{\infty}\mb{y}^{k+1}\right\|^2\Big|\mc{F}^k\right]$.
		\begin{align*}
		&\mathbb{E}\left[\left\|\mb{y}^{k+1} - \mb{1}_n\ol{\mb{y}}^{k+1}\right\|^2\Big|\mc{F}^k\right]
		\nonumber\\
		\leq&~
		\sigma^2\mathbb{E}\left[\left\|\mb{y}^{k} - W_{\infty}\mb{y}^{k}\right\|^2\Big|\mc{F}^k\right]
		\nonumber\\
		&+ 60L^2\left\|\mb{x}^k-\mb{1}_n\ol{\mb{x}}^k\right\|^2 + 46nL^2\left\|\ol{\mb x}^{k}-\mb{x}^*\right\|^2
		+ \frac{24L^2\alpha^2}{1-\sigma^2}\left\|\mb{y}^k-\mb{1}_n\ol{\mb{y}}^k\right\|^2 + 36nL^2\mb{t}^k\nonumber\\
		&+ \frac{44L^2}{1-\sigma^2}\left\|\mb{x}^k-\mb{1}_n\ol{\mb{x}}^k\right\|^2
		+ \frac{28nL^2}{1-\sigma^2}\left\|\ol{\mb{x}}^k-\mb{x}^*\right\|^2
		+ \left(\frac{1-\sigma^2}{2}+\frac{16L^2\alpha^2}{1-\sigma^2}\right)\mathbb{E}\left[\left\|\mb{y}^k - \mb{1}_n\ol{\mb{y}}^k\right\|^2\Big|\mc{F}^k\right]
		+ \frac{24L^2}{1-\sigma^2}\mb{t}^k\nonumber\\
		\leq&~
		\frac{104L^2}{1-\sigma^2}\left\|\mb{x}^k-\mb{1}_n\ol{\mb{x}}^k\right\|^2
		+ \frac{74nL^2}{1-\sigma^2}\left\|\ol{\mb{x}}^k-\mb{x}^*\right\|^2
		+ \left(\frac{1+\sigma^2}{2}+\frac{40L^2\alpha^2}{1-\sigma^2}\right)\mathbb{E}\left[\left\|\mb{y}^k - \mb{1}_n\ol{\mb{y}}^k\right\|^2\Big|\mc{F}^k\right]
		+ \frac{60nL^2}{1-\sigma^2}\mb{t}^k,
		\end{align*}
		which finishes the proof.
	\end{proof}
	\subsection{Main Results}
	With the help of previous Lemmas, we derive the range of the step-size~$\alpha$ where~\textbf{\texttt{GT-SAGA}} achieves linear convergence. Recall Lemma~\ref{1}-\ref{4} and take total expectation of these inequalities to obtain: 
	\begin{align}
	&\mathbb{E}\left[\left\|\mb{x}^{k+1}-\mb{1}_n\ol{\mb{x}}^{k+1}\right\|^2\right]\leq
	\frac{1+\sigma^2}{2}\mathbb{E}\left[\left\|\mb{x}^k-\mb{1}_n\ol{\mb{x}}^k\right\|^2\right] + \frac{2\alpha^2}{1-\sigma^2}\mathbb{E}\left[\left\|\mb{y}^k-\mb{1}_n\ol{\mb{y}}^k\right\|^2. \label{1}\right]\\
	&\mathbb{E}\left[n\left\|\ol{\mb x}^{k+1}-\mb{x}^*\right\|^2\right]
	\leq
	\frac{3L^2\alpha}{2\mu}
	\mathbb{E}\left[\left\|\mb{x}^k-\mb{1}_n\ol{\mb{x}}^k\right\|^2\right]
	+ \left(1-\frac{\mu\alpha}{2}\right)\mathbb{E}\left[n\left\|\ol{\mb x}^{k}-\mb{x}^*\right\|^2\right]
	+ \frac{4L^2\alpha^2}{n}\mathbb{E}\left[n\mb{t}^k\right].
	\label{2}\\
	&\mathbb{E}\left[n\mb{t}^{k+1}\right]\leq \frac{2}{m}\mathbb{E}\left[\left\|\mb{x}^k-\mb{1}_n\ol{\mb{x}}^k\right\|^2\right]
	+ \frac{2}{m}\mathbb{E}\left[n\left\|\ol{\mb{x}}^k-\mb{x}^*\right\|^2\right]
	+ \left(1-\frac{1}{M}\right)\mathbb{E}\left[n\mb{t}^k\right].
	\label{3}\\
	&\mathbb{E}\left[\left\|\mb{y}^{k+1} - \mb{1}_n\ol{\mb{y}}^{k+1}\right\|^2\right]
	\leq\frac{104L^2}{1-\sigma^2}\mathbb{E}\left[\left\|\mb{x}^k-\mb{1}_n\ol{\mb{x}}^k\right\|^2\right]
	+ \frac{74L^2}{1-\sigma^2}\mathbb{E}\left[n\left\|\ol{\mb{x}}^k-\mb{x}^*\right\|^2\right]
	+
	\frac{60L^2}{1-\sigma^2}\mathbb{E}\left[n\mb{t}^k\right] 
	\nonumber\\
	&\qquad\qquad\qquad\qquad\qquad\qquad
	+
	\left(\frac{1+\sigma^2}{2}+\frac{40L^2\alpha^2}{1-\sigma^2}\right)\mathbb{E}\left[\left\|\mb{y}^k - \mb{1}_n\ol{\mb{y}}^k\right\|^2\right].\label{4}
	\end{align}
	Now, we write~\eqref{1}-\eqref{4} in the form of a linear system as follows.
	$$\mb{u}^{k+1}\leq J_\alpha\mb{u}^{k},$$
	where~$\mb{u}^k\in\mathbb{R}^4$ and~$J_\alpha\in\mathbb{R}^{4\times 4}$ are given below.
	\begin{align}
	\mb{u}^k=\left[
	\begin{array}{l}
	\mathbb{E}\left[\left\|\mb{x}^{k}-\mb{1}_n\ol{\mb{x}}^{k}\right\|^2\right] \\
	\mathbb{E}\left[n\left\|\ol{\mb x}^{k}-\mb{x}^*\right\|^2\right] \\
	\mathbb{E}\left[n\mb{t}^{k}\right] \\
	\mathbb{E}\left[\left\|\mb{y}^{k} - \mb{1}\ol{\mb{y}}^{k}\right\|^2\right]
	\end{array}
	\right],\qquad
	J_\alpha=\left[
	\begin{array}{cccc}
	\frac{1+\sigma^2}{2} & 0 & 0 &\frac{2\alpha^2}{1-\sigma^2} \\
	\frac{3L^2\alpha}{2\mu}&  1-\frac{\mu\alpha}{2} & 4L^2\alpha^2 & 0\\
	\frac{2}{m} & \frac{2}{m} & 1-\frac{1}{M} &0\\
	\frac{104L^2}{1-\sigma^2} & \frac{74L^2}{1-\sigma^2} & \frac{60L^2}{1-\sigma^2}  & \frac{1+\sigma^2}{2} + \frac{40L^2\alpha^2}{1-\sigma^2}
	\end{array}
	\right].
	\end{align}
	Next, we derive the range of~$\alpha$ such that~$\rho\left(J_\alpha\right)<1$. To do that, we present the following Lemma from~\cite{matrix_analysis}. For the sake of completeness, we also give its proof here.
	\begin{lem}\label{rho_bound}
		Let~$A\in\mathbb{R}^{d\times d}$ be a non-negative matrix and~$\mb{x}\in\mathbb{R}^d$ be a positive vector. If~$A\mb{x}\leq\beta\mb{x}$ for~$\beta>0$, then~$\rho(A)\leq\beta$. If~$A\mb{x}<\gamma\mb{x}$ for~$\gamma>0$, then~$\rho(A)<\gamma$.  
	\end{lem}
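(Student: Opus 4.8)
The plan is to reduce the spectral-radius estimate to the elementary fact that $\rho(A)\leq\|A\|$ for every submultiplicative matrix norm, after a diagonal change of coordinates that converts the hypothesis $A\mathbf{x}\leq\beta\mathbf{x}$ into a bound on row sums. Concretely, since $\mathbf{x}$ is positive I would set $D\triangleq\mathrm{diag}(x_1,\dots,x_d)$, which is invertible, and consider the similarity transform $B\triangleq D^{-1}AD$. Conjugation by a positive diagonal matrix keeps all entries non-negative and leaves the spectrum unchanged, so $\rho(B)=\rho(A)$; moreover a direct computation gives that the $i$-th row sum of $B$ equals $\frac{1}{x_i}\sum_{j}A_{ij}x_j=\frac{(A\mathbf{x})_i}{x_i}$.

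With this identity in hand, the first claim is immediate. The hypothesis $A\mathbf{x}\leq\beta\mathbf{x}$ says exactly that every row sum of the non-negative matrix $B$ is at most $\beta$, i.e.\ $\|B\|_{\infty}\leq\beta$ for the maximum-row-sum (operator $\infty$) norm. Since the spectral radius is dominated by any induced norm, I would conclude $\rho(A)=\rho(B)\leq\|B\|_{\infty}\leq\beta$.

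For the strict claim I would reduce to the first part using finiteness of the index set. If $A\mathbf{x}<\gamma\mathbf{x}$ entrywise with $\mathbf{x}>0$, then $(A\mathbf{x})_i/x_i<\gamma$ for each of the finitely many indices $i$, so the quantity $\beta\triangleq\max_{i}(A\mathbf{x})_i/x_i$ is a genuine maximum, is attained, and therefore satisfies $\beta<\gamma$. By construction $A\mathbf{x}\leq\beta\mathbf{x}$, and the already-established bound yields $\rho(A)\leq\beta<\gamma$.

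The argument is short, and I do not expect a serious obstacle; the only point that needs care is the strict case, where it is essential that the maximum runs over a finite set (so that strict pointwise inequalities yield a uniform gap $\gamma-\beta>0$) and that $\mathbf{x}$ is strictly positive (so that $D$ is invertible and the ratios are well defined). If one preferred to avoid the similarity transform, an equivalent route is to iterate the hypothesis into $A^{k}\mathbf{x}\leq\beta^{k}\mathbf{x}$ using non-negativity, bound the entries of $A^{k}$ by $\beta^{k}\,(\max_\ell x_\ell)/(\min_\ell x_\ell)$, and invoke Gelfand's formula $\rho(A)=\lim_{k\to\infty}\|A^{k}\|^{1/k}$; the diagonal reduction above is simply the cleaner packaging of the same idea.
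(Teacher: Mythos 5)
Your proof is correct and follows essentially the same route as the paper: both use the diagonal similarity $D^{-1}AD$ (the paper calls it $S^{-1}AS$), identify its maximum row sum with $\max_i (A\mb{x})_i/x_i$, and invoke $\rho(\cdot)\leq\mn{\cdot}_\infty$, handling the strict case by extracting a $\beta<\gamma$ from the finitely many strict ratio inequalities. No substantive differences to report.
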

	\begin{proof}
		We use~$x_i$ to denote the~$i$th entry of~$\mb{x}$. If~$A\mb{x}\leq\beta\mb{x}$, then~$\sum_{j=1}^{d}a_{ij}x_i\leq\beta x_i$,~$\forall i\in\{1,\cdots,d\}$. Define~$S \triangleq \mbox{diag}\{x_1,\cdots,x_d\}$. Then we have, 
		\begin{align*}
		\beta \geq \max_{i\in\{1,\cdots,d\}}\sum_{j=1}^{d}x_i^{-1}a_{ij}x_j
		= \mn{S^{-1}AS}_\infty\geq\rho\left(S^{-1}AS\right)
		=\rho\left(A\right),
		\end{align*} 
		where~$\mn{\cdot}_\infty$ denotes the matrix norm of maximum row sum. If~$A\mb{x}<\gamma\mb{x}$,~$\exists\gamma'>0$, such that~$\gamma'<\gamma$ and~$A\mb{x}\leq\gamma'\mb{x}$. Therefore,~$\rho\left(A\right)\leq\gamma'<\gamma$.
	\end{proof}
	\begin{theorem}
		If the step-size~$\alpha$ satisfies~$0<\alpha < \frac{m}{M}\frac{\left(1-\sigma^2\right)^2}{140QL},$
		then~\textbf{\texttt{GT-SAGA}} is linearly convergent. Moreover, if~$\alpha = \frac{m}{M}\frac{\left(1-\sigma^2\right)^2}{150QL}$,~\textbf{\texttt{GT-SAGA}} achieves~$\epsilon$-accuracy in~$$O\left(\max\left\{M,\frac{m}{M}\frac{Q^2}{\left(1-\sigma\right)^2}\right\}\log\frac{1}{\epsilon}\right)$$
		iterations (local component gradient computations), where~$m$ and~$M$ are respectively the minimum and maximum number of local functions at all nodes.
	\end{theorem}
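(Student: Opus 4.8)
The statement splits into two claims: (i) $\rho(J_\alpha)<1$ on the stated range of $\alpha$, which through the entrywise recursion yields linear convergence, and (ii) an explicit rate for the prescribed step-size that produces the iteration count. The plan is to obtain both from a single construction of a positive test vector for $J_\alpha$.

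For (i), I would first note that $J_\alpha$ is entrywise non-negative and $\mb{u}^k\geq\mb{0}$, so the scalar inequalities \eqref{1}--\eqref{4} assembled as $\mb{u}^{k+1}\leq J_\alpha\mb{u}^k$ unroll to $\mb{u}^k\leq J_\alpha^k\mb{u}^0$. Hence it suffices to prove $\rho(J_\alpha)<1$, and to invoke Lemma~\ref{rho_bound} I would exhibit a positive vector $\bds{\delta}=(\delta_1,\delta_2,\delta_3,\delta_4)^\top$ and a scalar $\gamma<1$ with $J_\alpha\bds{\delta}<\gamma\bds{\delta}$ componentwise. Observe that rows~1 and~4 carry the ``fast'' diagonal $\tfrac{1+\sigma^2}{2}$ (plus an $O(\alpha^2)$ term in row~4), while rows~2 and~3 carry the ``slow'' diagonals $1-\tfrac{\mu\alpha}{2}$ and $1-\tfrac1M$; accordingly I would target $\gamma=1-\tfrac12\min\{\tfrac{\mu\alpha}{2},\tfrac1M\}$, i.e. just above the slower of the two slow modes. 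This leaves each slow row a contraction margin of at least $\tfrac{\mu\alpha}{4}$ or $\tfrac1{2M}$ and keeps $1-\gamma$ small enough that rows~1 and~4 stay contractive.

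The heart of the argument is the choice of $\bds{\delta}$, which I would build from the natural coupling chain. Row~3 forces $\delta_3\gtrsim\tfrac Mm(\delta_1+\delta_2)$; feeding this into row~4 forces $\delta_4\gtrsim\tfrac{L^2}{(1-\sigma^2)^2}\delta_3$; feeding that into row~1 forces $\delta_1\gtrsim\tfrac{\alpha^2}{(1-\sigma^2)^2}\delta_4$; and row~2 forces $\delta_2\gtrsim Q^2\delta_1$ together with $\delta_2\gtrsim LQ\alpha\,\delta_3$. Normalizing $\delta_2$ and closing this loop turns each of the four inequalities into a numerical condition on $\alpha$; the two binding ones are $\alpha\lesssim\tfrac{(1-\sigma^2)^2}{QL}\sqrt{m/M}$ (from the consensus/tracking loop of rows~1 and~4) and $\alpha\lesssim\tfrac1{QL}\tfrac mM$ (from the table cross-term in row~2). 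The prescribed threshold $\alpha<\tfrac mM\tfrac{(1-\sigma^2)^2}{140QL}$ is a clean common bound lying below both (it carries the factor $\tfrac mM$ \emph{and} $(1-\sigma^2)^2$, hence implies each with slack), so all four strict inequalities hold and Lemma~\ref{rho_bound} gives $\rho(J_\alpha)<\gamma<1$.

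For (ii), with $\bds{\delta}$ fixed I would pick $C=\max_i u_i^0/\delta_i$ so that $\mb{u}^0\leq C\bds{\delta}$ and hence $\mb{u}^k\leq C J_\alpha^k\bds{\delta}\leq C\gamma^k\bds{\delta}$; in particular $\mathbb{E}[n\|\ol{\mb x}^k-\mb{x}^*\|^2]\leq C\gamma^k\delta_2$. Substituting $\alpha=\tfrac mM\tfrac{(1-\sigma^2)^2}{150QL}$ (safely inside the range, the $140$ versus $150$ gap providing the strict slack) gives $1-\gamma=\tfrac12\min\{\tfrac{\mu\alpha}{2},\tfrac1M\}$ with $\tfrac{\mu\alpha}{2}=\Theta\!\big(\tfrac mM\tfrac{(1-\sigma^2)^2}{Q^2}\big)$, so reaching $\epsilon$-accuracy needs $O\!\big(\tfrac1{1-\gamma}\log\tfrac1\epsilon\big)=O\!\big(\max\{M,\tfrac Mm\tfrac{Q^2}{(1-\sigma)^2}\}\log\tfrac1\epsilon\big)$ iterations, using $1-\sigma^2\asymp1-\sigma$, consistent with the complexity stated in the introduction. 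I expect the main obstacle to be precisely the $\bds{\delta}$-construction: one must satisfy all four coupled inequalities with a \emph{single} $\gamma<1$, and the large row~4 coefficients $104,74,60$ force the consensus and tracking coordinates to be scaled by carefully matched powers of $\alpha$ and $(1-\sigma^2)$, so that the slow modes (not the fast ones) dictate $\gamma$ and so that the step-size threshold comes out as claimed; this is delicate bookkeeping rather than a single conceptual hurdle.
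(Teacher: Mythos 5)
Your proposal follows essentially the same route as the paper: a positive test vector with $J_\alpha\bds{\epsilon}<\left(1-\tfrac{1}{\kappa}\right)\bds{\epsilon}$ built from the same coupling chain (the paper fixes $\epsilon_1=1$, $\epsilon_2\sim Q^2$, $\epsilon_3\sim\tfrac{M}{m}Q^2$, $\epsilon_4\sim\tfrac{L^2}{(1-\sigma^2)^2}\epsilon_3$), the same two binding step-size constraints $\alpha\lesssim\tfrac{(1-\sigma^2)^2}{QL}\sqrt{m/M}$ and $\alpha\lesssim\tfrac{m}{M}\tfrac{1}{QL}$, and the rate read off the slow row margins $\min\{\tfrac{\mu\alpha}{2},\tfrac{1}{M}\}$ via Lemma~\ref{rho_bound}. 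One remark: your final count $O\left(\max\left\{M,\tfrac{M}{m}\tfrac{Q^2}{(1-\sigma)^2}\right\}\log\tfrac{1}{\epsilon}\right)$ with the factor $\tfrac{M}{m}$ is what the paper's own computation in \eqref{k1}--\eqref{k4} actually yields and matches the complexity quoted in the introduction; the $\tfrac{m}{M}$ appearing in the theorem statement is a typo.
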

	\begin{proof}
		In the light of Lemma~\ref{rho_bound}, we solve for the range of the step-size~$\alpha$ and a positive vector~$\bds\epsilon = \left[\epsilon_1,\epsilon_2,\epsilon_3,\epsilon_4\right]^\top$ such that the following (entry-wise) inequality holds for some~$\kappa>1$.
		\begin{align*}
		J_\alpha\bds{\epsilon} < \left(1-\frac{1}{\kappa}\right)\bds\epsilon.
		\end{align*}
		We expand the above matrix-vector inequality as follows.
		\begin{align}
		\frac{1+\sigma^2}{2}\epsilon_1 + \frac{2\alpha^2}{1-\sigma^2}\epsilon_4 <&~ \left(1-\frac{1}{\kappa}\right)\epsilon_1.
		\label{r1}\\
		\frac{3L^2\alpha}{2\mu}\epsilon_1 + \left(1-\frac{\mu\alpha}{2}\right)\epsilon_2 + 4L^2\alpha^2\epsilon_3 <&~ \left(1-\frac{1}{\kappa}\right)\epsilon_2.
		\label{r2}\\
		\frac{2}{m}\epsilon_1 + \frac{2}{m}\epsilon_2
		+ \left(1-\frac{1}{M}\right)\epsilon_3 <&~ \left(1-\frac{1}{\kappa}\right)\epsilon_3.
		\label{r3}\\
		\frac{104L^2}{1-\sigma^2}\epsilon_1 
		+ \frac{74L^2}{1-\sigma^2}\epsilon_2
		+ \frac{60L^2}{1-\sigma^2}\epsilon_3
		+ \frac{1+\sigma^2}{2}\epsilon_4
		+ \frac{40L^2\alpha^2}{1-\sigma^2}\epsilon_4 <&~ \left(1-\frac{1}{\kappa}\right)\epsilon_4.\label{r4}
		\end{align}
		Then we rewrite the above inequalities in the following form:
		\begin{align}
		\frac{1}{\kappa}\leq&~\frac{1-\sigma^2}{2} - \frac{2\alpha^2}{1-\sigma^2}\frac{\epsilon_4}{\epsilon_1}
		\label{r1'}\\
		\frac{1}{\kappa}\leq&~
		\frac{\mu\alpha}{2} - \frac{3L^2\alpha}{2\mu}\frac{\epsilon_1}{\epsilon_2}
		- \frac{4L^2\alpha^2\epsilon_3}{\epsilon_2} \label{r2'}\\
		\frac{1}{\kappa}
		\leq&~\frac{1}{M} - \frac{2}{m}\frac{\epsilon_1}{\epsilon_3}
		- \frac{2}{m}\frac{\epsilon_2}{\epsilon_3} \label{r3'}\\
		\frac{1}{\kappa}\leq&~
		\frac{1-\sigma^2}{2}
		-\frac{104L^2}{1-\sigma^2}\frac{\epsilon_1}{\epsilon_4}
		- \frac{74L^2}{1-\sigma^2}\frac{\epsilon_2}{\epsilon_4}
		- \frac{60L^2}{1-\sigma^2}\frac{\epsilon_3}{\epsilon_4}
		- \frac{40L^2\alpha^2}{1-\sigma^2}\label{r4'}
		\end{align}
		It is straightforward to see that the requirement that~\eqref{r1'}-\eqref{r4'} hold for some~$\kappa>1$ is equivalent to the RHS of~\eqref{r1'}-\eqref{r4'} being positive. Next, we fix the positive vector~$\bds\epsilon$ that is independent of~$\alpha$ and~$\kappa$. The RHS of~\eqref{r2'} being positive is equivalent to the following:
		\begin{align}
		4L^2\epsilon_3\alpha < \frac{\mu}{2}\epsilon_2 
		- \frac{3L^2}{2\mu}\epsilon_1.\label{r2''}
		\end{align}
		We set~$\epsilon_1 = 1$ and~$\epsilon_2 = 4Q$, where~$Q=L/\mu$.
		The RHS of~\eqref{r3'} being positive is equivalent to the following:
		\begin{align}
		\epsilon_3 > \frac{2M}{m}\epsilon_1 + \frac{2M}{m}\epsilon_2
		= \frac{2M}{m} + \frac{8MQ^2}{m},
		\end{align}
		where we used the previously fixed values of~$\epsilon_1$ and~$\epsilon_2$. We therefore set~$\epsilon_3 = \frac{12MQ^2}{m}$. Finally, we note that the RHS of~\eqref{r4'} being positive is equivalent to the following:
		\begin{align}\label{r4''}
		\frac{40L^2\alpha^2}{1-\sigma^2}\epsilon_4 < 
		\frac{1-\sigma^2}{2}\epsilon_4
		- \frac{104L^2}{1-\sigma^2}\epsilon_1 
		- \frac{74L^2}{1-\sigma^2}\epsilon_2
		- \frac{60L^2}{1-\sigma^2}\epsilon_3.
		\end{align}
		For the RHS of~\eqref{r4''} to be positive, 
		\begin{align*}
		\epsilon_4 >
		\frac{2L^2}{\left(1-\sigma^2\right)^2}
		\left(104 + 296Q^2 + \frac{720MQ^2}{m}\right),
		\end{align*}
		where we used the previously fixed values of~$\epsilon_1,\epsilon_2$ and~$\epsilon_3$. Since~$104 + 296Q^2 + \frac{720MQ^2}{m} < \frac{1120MQ^2}{m}$, we set~$\epsilon_4 = \frac{2250}{\left(1-\sigma^2\right)^2}\frac{ML^2Q^2}{m}$. So far, we have fixed the values of~$\epsilon_1,\epsilon_2,\epsilon_3$ and~$\epsilon_4$ as the following:
		\begin{align}
		\epsilon_1 = 1,\quad\epsilon_2 = 4Q,\quad\epsilon_3 = \frac{12MQ^2}{m},\quad\epsilon_4 = \frac{2250}{\left(1-\sigma^2\right)^2}\frac{ML^2Q^2}{m}.
		\end{align}
		Now, we find the range of~$\alpha$ from~\eqref{r1'},~\eqref{r2''} and~\eqref{r4''}. 
		For the RHS of~\eqref{r1'} to be positive, we have that:
		\begin{align}
		\alpha < \sqrt{\frac{\left(1-\sigma^2\right)^2}{4}\frac{\epsilon_1}{\epsilon_4}}
		= \sqrt{\frac{\left(1-\sigma^2\right)^2}{4}\frac{\left(1-\sigma^2\right)^2}{2250}\frac{m}{ML^2Q^2}}
		= \frac{\left(1-\sigma^2\right)^2}{30\sqrt{10}}\frac{\sqrt{m}}{\sqrt{M}LQ} \label{a1}.
		\end{align}
		From~\eqref{r2''}, we have that
		\begin{align}
		\alpha < \frac{1}{8\mu\epsilon_3} = \frac{1}{8\mu}\frac{m}{12MQ^2} = \frac{m}{96M}\frac{1}{QL}
		\label{a2}
		\end{align}
		Finally, from~\eqref{r4''}, we have that:	
		\begin{align}\label{a4}
		&~\frac{40L^2\alpha^2}{1-\sigma^2}\epsilon_4 < 
		\frac{1-\sigma^2}{2}\epsilon_4
		- \left(\frac{104L^2}{1-\sigma^2}\epsilon_1 
		+ \frac{74L^2}{1-\sigma^2}\epsilon_2
		+ \frac{60L^2}{1-\sigma^2}\epsilon_3 \right). \nonumber\\
		\Longleftarrow&~
		\frac{40L^2\alpha^2}{1-\sigma^2}\epsilon_4 < 
		\frac{1-\sigma^2}{2}\epsilon_4
		- \frac{1120ML^2Q^2}{m(1-\sigma^2)} \nonumber\\
		\iff&
		\frac{40L^2}{1-\sigma^2}\alpha^2
		< \frac{1-\sigma^2}{2} - \frac{1120ML^2Q^2}{m(1-\sigma^2)}
		\frac{\left(1-\sigma^2\right)^2}{2250}\frac{m}{ML^2Q^2}
		=\frac{1-\sigma^2}{450} 
		\iff
		\alpha < \frac{1-\sigma^2}{60\sqrt{5}L}
		\end{align}	
		Therefore, from~\eqref{a1},~\eqref{a2} and~\eqref{a4}, we have that if~$\alpha$ satisfies:
		\begin{align*}
		&~\alpha < \ol\alpha\triangleq\min\left\{\frac{\left(1-\sigma^2\right)^2}{30\sqrt{10}}\frac{\sqrt{m}}{\sqrt{M}LQ},\frac{m}{96M}\frac{1}{QL},\frac{1-\sigma^2}{60\sqrt{5}L}\right\}, \nonumber\\
		\Longleftarrow&~
		\alpha < \frac{m}{M}\frac{\left(1-\sigma^2\right)^2}{140QL},
		\end{align*}
		there exists a sufficiently large~$\kappa_\alpha$ such that~\eqref{r1'}-\eqref{r4'} hold with~$\bds\epsilon =\left[1,4Q,\frac{12MQ^2}{m},\frac{2250}{\left(1-\sigma^2\right)^2}\frac{ML^2Q^2}{m}\right]^\top$, i.e, the algorithm is linearly convergent. Next, we derive an explicit convergence rate when we set~$\alpha = \frac{m}{M}\frac{\left(1-\sigma^2\right)^2}{150QL}$, which is slightly smaller than~$\ol\alpha$.
		From~\eqref{r1'}, we have that:
		\begin{align}\label{k1}
		&~\frac{1}{\kappa}\leq \frac{1-\sigma^2}{2} - \frac{2\alpha^2}{1-\sigma^2}\frac{\epsilon_4}{\epsilon_1}
		= \frac{1-\sigma^2}{2} - \frac{2}{1-\sigma^2}\frac{m^2}{M^2}\frac{\left(1-\sigma^2\right)^4}{150^2Q^2L^2}\frac{2250}{\left(1-\sigma^2\right)^2}\frac{ML^2Q^2}{m} = \frac{1-\sigma^2}{2} - \frac{1-\sigma^2}{5}\frac{m}{M}\nonumber\\
		\Longleftarrow&~
		\frac{1}{\kappa}\leq \frac{3\left(1-\sigma^2\right)}{10}.
		\end{align}
		From~\eqref{r2'}, we have that:
		\begin{align}\label{k2}
		\frac{1}{\kappa}
		&~\leq\frac{\mu\alpha}{2} - \frac{3L^2\alpha}{2\mu}\frac{\epsilon_1}{\epsilon_2}
		- \frac{4L^2\alpha^2\epsilon_3}{\epsilon_2}\nonumber\\
		&~=
		\frac{\mu}{2}\frac{m}{M}\frac{\left(1-\sigma^2\right)^2}{150QL} - \frac{3L^2}{2\mu}\frac{m}{M}\frac{\left(1-\sigma^2\right)^2}{150QL}\frac{1}{4Q}
		- \frac{4L^2}{4Q}\frac{12MQ^2}{m}\frac{m^2}{M^2}\frac{\left(1-\sigma^2\right)^4}{150^2Q^2L^2} \nonumber\\
		&~=\frac{m}{M}\frac{\left(1-\sigma^2\right)^2}{300Q^2}
		- \frac{m}{M}\frac{\left(1-\sigma^2\right)^2}{400Q}
		- \frac{m}{M}\frac{12\left(1-\sigma^2\right)^4}{150^2Q}
		\nonumber\\
		\Longleftarrow&~\frac{1}{\kappa}\leq\frac{m}{M}\frac{\left(1-\sigma^2\right)^2}{1200Q^2}
		- \frac{m}{M}\frac{\left(1-\sigma^2\right)^2}{1875}
		= \frac{3}{1000}\frac{m}{M}\frac{\left(1-\sigma^2\right)^2}{Q^2}
		\end{align}
		From~\eqref{r2'}, we have that:
		\begin{align}\label{k3}
		&~\frac{1}{\kappa}
		\leq\frac{1}{M} - \frac{2}{m}\frac{\epsilon_1}{\epsilon_3}
		- \frac{2}{m}\frac{\epsilon_2}{\epsilon_3}
		= \frac{1}{M} - \frac{2}{m}\frac{m}{12MQ^2}
		-\frac{2}{m}\frac{4Qm}{12MQ^2}
		= \frac{1}{M} -\frac{1}{6MQ^2} - \frac{2}{3MQ^2}
		\nonumber\\
		\Longleftarrow&~
		\frac{1}{\kappa}\leq\frac{1}{6M}
		\end{align}	
		Finally, from~\eqref{r4'} we have that:
		\begin{align}\label{k4}
		&\frac{1}{\kappa}\leq
		\frac{1-\sigma^2}{2}
		-\left(\frac{104L^2}{1-\sigma^2}\epsilon_1
		+ \frac{74L^2}{1-\sigma^2}\epsilon_2
		+ \frac{60L^2}{1-\sigma^2}\epsilon_3\right)\frac{1}{\epsilon_4}
		- \frac{40L^2\alpha^2}{1-\sigma^2}\nonumber\\
		\Longleftarrow&
		\frac{1}{\kappa}\leq\frac{1-\sigma^2}{2}
		- \frac{1120MQ^2L^2}{m\left(1-\sigma^2\right)}\frac{\left(1-\sigma^2\right)^2}{2250}\frac{m}{ML^2Q^2}
		- \frac{40L^2}{1-\sigma^2}\frac{m^2}{M^2}\frac{\left(1-\sigma^2\right)^4}{150^2Q^2L^2}\nonumber\\
		\iff&
		\frac{1}{\kappa}\leq\frac{1-\sigma^2}{2}
		- \frac{1120\left(1-\sigma^2\right)}{2250}
		- \frac{4}{2250}\frac{m^2}{M^2}\frac{\left(1-\sigma^2\right)^3}{Q^2}\nonumber\\
		\Longleftarrow&~
		\frac{1}{\kappa}\leq\frac{1-\sigma^2}{2250}
		\end{align}	
		Therefore, from~\eqref{k1}-\eqref{k4}, we have:
		\begin{align*}
		\frac{1}{\kappa}\leq\min\left\{\frac{3\left(1-\sigma^2\right)}{10},~\frac{3}{1000}\frac{m}{M}\frac{\left(1-\sigma^2\right)^2}{Q^2},~\frac{1}{6M},~\frac{1-\sigma^2}{2250}\right\},
		\end{align*}
		which completes the proof.
	\end{proof}

	\bibliographystyle{IEEEbib}
	\bibliography{sample1.bib}
\end{document}